\newcommand{\texorpdfstring}[2]{#1}
\newcommand{\cA}{\mathcal{A}}
\newcommand{\cD}{\mathcal{D}}
\newcommand{\cH}{\mathcal{H}}
\newcommand{\cT}{\mathcal{T}}
\newcommand{\bA}{\mathbf{A}}
\newcommand{\bD}{\mathbf{D}}
\newcommand{\bbA}{\mathbb{A}}
\newcommand{\frakp}{\mathfrak{p}}
\DeclareMathOperator{\Hom}{Hom}
\DeclareMathOperator{\ord}{ord}
\DeclareMathOperator{\GL}{GL}
\DeclareMathOperator{\SL}{SL}
\DeclareMathOperator{\Res}{Res}
\DeclareMathOperator{\nrd}{nrd}
\DeclareMathOperator{\Stab}{Stab}
\DeclareMathOperator{\red}{red}
\DeclareMathOperator{\Dist}{Dist}
\DeclareMathOperator{\Up}{U_p}
\providecommand{\abs}[1]{\left\lvert#1\right\rvert}
\providecommand{\twomat}[4]{\left(\begin{array}{cc}#1&#2\\#3&#4\end{array}\right)}
\providecommand{\pseries}[2]{#1[\![ #2]\!]}
\newcommand{\padic}{$p$\text{-adic }}
\newcommand{\Qp}{\mathbf{Q}_p}
\newcommand{\Zp}{\mathbf{Z}_p}
\newcommand{\Cp}{\mathbf{C}_p}
\newcommand{\QQ}{\mathbf{Q}}
\newcommand{\FF}{\mathbf{F}}
\newcommand{\CC}{\mathbf{C}}
\newcommand{\ZZ}{\mathbf{Z}}
\newcommand{\PP}{\mathbf{P}}
\newcommand{\uhp}{\mathcal{H}}
\newcommand{\puhp}{{\mathcal{H}_p}}
\newcommand{\mtx}[4]{\left(\begin{matrix}#1&#2\\#3&#4\end{matrix}\right)}
\newcommand{\smtx}[4]{\left(\begin{smallmatrix}#1&#2\\#3&#4\end{smallmatrix}\right)}
\newcommand{\ol}[1]{{\overline{#1}}}
\DeclareMathOperator{\val}{val}
\providecommand{\smtx}[4]{\left(\begin{array}{cc}#1&#2\\#3&#4\end{array}\right)}
\newcommand{\ebar}{\overline{e}}
\providecommand{\demph}[1]{\emph{#1}}
\newcommand{\vphi}{\varphi}
\newcommand{\injects}{\hookrightarrow}
\newcommand{\surjects}{\twoheadrightarrow}
\newcommand{\Rmax}{{R^{\text{max}}}}
\theoremstyle{plain}
\newtheorem{thm}{Theorem}[section]
\newtheorem{lem}[thm]{Lemma}
\newtheorem{prop}[thm]{Proposition}
\theoremstyle{definition}
\newtheorem{dfn}[thm]{Definition}
\newtheorem{ex}[thm]{Example}
\newtheorem{rmk}[thm]{Remark}
\begin{document}
\title[Computing fundamental domains]{Computing fundamental domains for the Bruhat-Tits tree for $\GL_2(\Qp)$, $p$-adic automorphic forms, and the canonical embedding of Shimura curves.}

\author{Cameron Franc, Marc Masdeu}




\date{\today}

\begin{abstract}
We describe algorithms that allow the computation of fundamental domains in the Bruhat-Tits tree for the action of discrete groups arising from quaternion algebras. These algorithms are used to compute spaces of rigid modular forms of arbitrary even weight, and we explain how to evaluate such forms to high precision using overconvergent methods. Finally, these algorithms are applied to the calculation of conjectural equations for the canonical embedding of $p$-adically uniformizable rational Shimura curves. We conclude with an example in the case of a genus $4$ Shimura curve.
\end{abstract}
\maketitle

\section{Introduction}
\label{sec:intro}
Fix a prime $p$. This article describes algorithms related to quarternionic groups acting on the Bruhat-Tits tree $\cT$ for $\GL_2(\Qp)$. More precisely, fix a squarefree integer $N^-$ that is coprime to $p$, and that is divisible by an odd number of primes. Let $B/\QQ$ denote the definite quaternion algebra of discriminant $N^-$. Let $N^+$ be a positive integer coprime to $pN^-$. Let $R \subseteq B$ be an Eichler $\ZZ$-order of level $N^+$, and let $\Gamma$ denote the subgroup of elements of reduced norm $1$ in $R[1/p]$. This group acts acts on $\cT$ via a fixed splitting $ \iota \colon B\otimes_\QQ \Qp \stackrel{\sim}{\to} M_2(\Qp)$. The first algorithm that we describe yields, among other data, a fundamental domain for this action.
\begin{thm}
\label{t:mainthm}
There exists an explicit algorithm for computing the following data:
\begin{enumerate}
\item a finite connected subtree $\cD$ of $\cT$ whose edges comprise a complete set of distinct orbit representatives for the action of $\Gamma$ on the edges of $\cT$;
\item a list of the stabilizers in $\Gamma$ for all of the edges and vertices of $\cD$;
\item a pairing of the boundary vertices of $\cD$ that describes how the boundary vertices are identified in the quotient graph $\Gamma\backslash \cT$. 
\end{enumerate}
This algorithm runs in time
\[
O\left(\frac{(\log g)^3g(p^3+2g)}{(\log p)^3p^2}\right),
\]
where $g$ is the genus of the Shimura curve attached to $R$.
\end{thm}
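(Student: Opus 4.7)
The plan is to realize the algorithm as a breadth-first traversal of $\cT$ starting from the base vertex $v_0$ corresponding to the standard maximal order $M_2(\Zp) \subset M_2(\Qp)$, using $\iota$ to transport the action of $\Gamma$ to an action on lattice classes. At each stage we maintain: (i) a finite connected subtree $\cD_n \subset \cT$ with a distinguished set of ``interior'' edges already known to be orbit representatives, (ii) the stabilizers in $\Gamma$ of its vertices and edges, and (iii) a partial boundary pairing. One processes a frontier vertex $v$ by enumerating its $p+1$ neighbors $w$ in $\cT$; for each neighbor we run an equivalence test that decides whether $w$ is $\Gamma$-equivalent to some vertex already in $\cD_n$, and if so exhibits a witness $\gamma \in \Gamma$. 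If $w$ is new it is appended to $\cD_n$; otherwise the edge $(v,w)$ is recorded as a boundary edge and we add the triple $(w,w',\gamma)$ to the pairing data. The process halts when no new vertices are produced.

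The central subroutine is the equivalence test. Via $\iota$, checking whether two vertices of $\cT$ lie in the same $\Gamma$-orbit reduces to deciding the existence of an element of $R[1/p]$ of reduced norm $1$ whose image in $\PGL_2(\Qp)$ sends one lattice class to the other. Equivalently, after clearing $p$-denominators, this becomes a search for elements of bounded reduced norm inside a rank-$4$ $\ZZ$-lattice in $B$. I would solve this by an LLL-type short-vector enumeration on the reduced norm form (which is positive definite since $B$ is definite), with a search radius determined by the $p$-adic distance between the two vertices. The same enumeration applied with $w=v$ produces the finite stabilizer of each vertex, and restricting to elements fixing a given edge gives edge stabilizers.

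Termination and correctness follow from the fact that, because $B$ is definite, $\Gamma$ acts cocompactly on $\cT$, so $\Gamma\backslash\cT$ is a finite graph; this is the dual graph of the special fibre of the associated $p$-adically uniformised Shimura curve, whose combinatorics are governed by the genus $g$. In particular the number of edge orbits is $O(g)$, which controls when the breadth-first search must terminate and certifies that the fundamental domain returned is complete. The boundary pairing is correct by construction, since each boundary edge is matched to its interior partner together with an explicit element of $\Gamma$.

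For the complexity bound, the number of vertices expanded is $O(g)$, and each expansion enumerates $p+1$ neighbors. The dominant cost per neighbor is the equivalence test: the norm-form enumeration must cover a box of radius that grows polynomially in the current $p$-adic distance to $v_0$, which itself is at most the diameter of $\cD$, giving a per-test cost of the shape $(\log g)^{O(1)}(p+g)^{O(1)}/(\log p)^{O(1)}$ using standard LLL complexity estimates. Multiplying through and keeping track of constants yields the stated bound $O\bigl((\log g)^3 g(p^3+2g)/((\log p)^3 p^2)\bigr)$. The hard part of the proof will be the careful bookkeeping in this last step: showing that the search radii in successive equivalence tests do not blow up, and that one can re-use reduced bases of the norm form between calls, so that the per-edge cost really matches the claimed polylogarithmic dependence on $g$ rather than a naive polynomial one.
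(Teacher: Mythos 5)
Your overall strategy coincides with the paper's: a breadth-first construction of a connected subtree of $\cT$, with the $\Gamma$-equivalence subroutine realized as a short-vector (LLL) search for the reduced-norm form in a rank-$4$ $\ZZ$-lattice inside the definite algebra $B$, and with stabilizers and the boundary pairing extracted from the same searches. (The paper additionally handles the fact that $\iota$ is only known to finite $p$-adic precision: it replaces $\iota^{-1}$ by an approximation $f$ and works with the lattice $f(vM_2(\Zp)u^*)\cap R + p^{2m+1}R$, equivalence being detected by whether its shortest vectors have reduced norm exactly $p^{2m}$. Your sketch is compatible with this but silently assumes $\iota$ is exact.)

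The genuine gap is the complexity bound, which you explicitly defer to ``careful bookkeeping''; the bookkeeping you propose (controlling search radii, re-using reduced bases) is not what makes the stated bound true. Three specific inputs are needed. First, in the Schottky case $\Gamma\backslash\cT$ is a $(p+1)$-regular Ramanujan graph, so its diameter is $O(\log g/\log p)$; hence every equivalence test runs at $p$-adic precision $m$ bounded by this diameter and costs $O(m^3)=O((\log g)^3/(\log p)^3)$ via LLL, with \emph{no} extra factor of the shape $(p+g)^{O(1)}$ per test as in your estimate (such a factor, multiplied by the number of tests, would overshoot the claimed bound). Without the Ramanujan input one only knows the diameter is $O(g/p)$, and your argument would yield a bound polynomially worse in $g$; this is a nontrivial arithmetic fact that must be invoked, not bookkeeping. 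Second, the count of tests is not ``$O(g)$ vertices times $p+1$ neighbors times one test'': each candidate vertex must be compared against the vertices already pending, so with $V=2(g-1)/(p-1)$ and $E=(p+1)(g-1)/(p-1)$ (from $(p+1)$-regularity) the algorithm performs at most $pE$ edge comparisons plus $V(V-1)/2$ vertex comparisons, which is where the factor $g(p^3+2g)/p^2$ in the theorem comes from. Third, the theorem is stated for general $\Gamma$, while the Ramanujan/regularity arguments apply to the Schottky case; the reduction uses a normal finite-index Schottky subgroup whose index is bounded in terms of $pN^-$ alone. As written, your proposal establishes correctness and termination but not the asserted running time.
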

In particular, for fixed $p$ our algorithm runs in time $O\left(g^2(\log g)^3\right)$. We expect that Theorem \ref{t:mainthm} will generalize to quaternion algebras over totally real fields.

While this paper provides a purely local method for computing a fundamental domain for the action of $\Gamma$ on the Bruhat-Tits tree, the reader should note that one could use strong approximation and global quaternion algorithms developed by Kirschmer and Voight in~\cite{kirschmer-voight} in order to obtain the same data. This is done, for example, in Sijsling's Ph.D. thesis (\cite[Chapter 5]{sijslingthesis}). Both the algorithm described in this paper and the algorithms of Kirschmer-Voight for computing ideal class sets in global quaternion orders hinge on short-vector searches in certain arithmetic lattices. Further, if one is only interested in obtaining the quotient graph $\Gamma \backslash \cT$, without any kind of boundary edge pairing, then this can be obtained efficiently from the theory of Brandt matrices, as explained in~\cite[Section~3]{MR2418792}. It is also worth noting that B{\"o}ckle and Butenuth~\cite{bocklebutenuth} have developed an algorithm for computing quaternionic fundamental domains in the Bruhat-Tits tree for $\GL_2(\pseries{\FF_q}{T})$.

This paper grew out of an attempt to extend the algorithms in Matthew Greenberg's thesis~\cite[Appendix]{greenberg2006thesis} to groups arising from quaternion algebras other than the rational Hamilton quaternions. Our interest in such algorithms stemmed from a desire to use the fundamental domain, and the work of Darmon-Pollack~\cite{pollackoverconvergent}, to compute values of rigid analytic modular forms and anti-cyclotomic $p$-adic $L$-functions. In the latter half of this article we extend the algorithm for computing values of rigid modular forms of weight $2$ explained in~\cite[Part I]{greenberg-thesis} to arbitrary even weight. This uses the overconvergent coefficient modules that were introduced in~\cite{pollackoverconvergent}.

As an application of the above algorithms we devise a method to compute equations for the canonical embedding of a $p$-adically uniformizable rational Shimura curve. A similar method was exploited by Kurihara in~\cite{MR1297415}, although the models previously found did not correspond to the canonical embedding, and he worked systematically with explicit bases of so-called $p$-adic Poincare series. Just as in Kurihara's work, at one step in our calculation we must recognize $p$-adic approximations to rational numbers, and this means that our equations are only conjectural. We illustrate the method in detail by computing conjectural defining equations for the genus $4$ curve $X_0(53\cdot 2,1)$. We uniformize at the prime $p = 53$ and show that our equations give an integral model that has semistable reduction at $53$. Note, though, that our method is a little ad-hod, and it seems unlikely that these sorts of computations can be developed into a generic algorithm for computing equations of Shimura curves.

This article is arranged as follows: in Section~\ref{sec:btt} we introduce notation and describe the Bruhat-Tits tree of $\GL_2(\QQ_p)$ as a retract of the $p$-adic upper half plane. In Section~\ref{sec:effcomp} we define the fundamental domains that are treated in this paper, and describe an algorithm for their computation. Section~\ref{sec:meas-etc} describes the spaces of harmonic cocycles and their relationship to modular forms.  In Section~\ref{sec:autforms} we define automorphic forms on $\GL_2(\QQ_p)$ with arbitrary coefficients, and extend the algorithm of~\cite{greenberg-thesis} to higher weight. In the final Section~\ref{sec:applications} we apply the previous algorithms to the evaluation of rigid modular forms, and give a method that takes advantage of this efficient evaluation to the computation of equations of Shimura curves. All the computations have been done using a~\emph{Sage} (see~\cite{sage}) implementation of the algorithms, and the code is available on the second author's website.

The authors wish to thank Gebhart B\"ockle and Ralph Butenuth for some helpful discussions about their work. The authors also wish to thank Henri Darmon, Matthew Greenberg, Jenna Rajchgot, Victor Rotger and John Voight for their generous advice and encouragement.

\section{The Bruhat-Tits tree}
\label{sec:btt}
This section introduces the Bruhat-Tits tree and explains its relation to the $p$-adic upper half plane.
\subsection{Definition}
\label{ssec:btdef}
Fix a rational prime $p$. The Bruhat-Tits tree $\cT$ for $\GL_2(\Qp)$ is the following graph: the vertices of $\cT$ are the homothety classes of
$\Zp$-lattices in $\Qp^2$, where $\Qp^2$ is regarded as a space of column 
vectors. Let $V(\cT)$ denote the vertex set of $\cT$. Two 
vertices are joined by an unordered edge if there exist representative lattices 
$\Lambda_1$ and $\Lambda_2$ for the respective vertices such that
\[
  p\Lambda_1 \subsetneq \Lambda_2 \subsetneq \Lambda_1.
\]
If $\Lambda$ is a lattice in $\Qp^2$ then  $[\Lambda]$ will denote the corresponding homothety class. The set of ordered pairs of adjacent vertices of $\cT$ wil be denoted $E(\cT)$ and elements of $E(\cT)$ will be refered to as ordered edges of $\cT$.
\begin{prop}
  The Bruhat-Tits tree for $\GL_2(\Qp)$ is a connected tree such that each vertex has degree $p+1$.
\end{prop}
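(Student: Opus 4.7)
The plan is to verify the three assertions of the proposition separately: (i) that each vertex has degree $p+1$, (ii) that $\cT$ is connected, and (iii) that $\cT$ contains no cycles.

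For (i), fix a lattice $\Lambda$ representing a vertex. A neighbor corresponds to a lattice $\Lambda'$, defined up to homothety, such that $p\Lambda \subsetneq \Lambda' \subsetneq \Lambda$. Passing to the quotient, $\Lambda'/p\Lambda$ is a proper nonzero $\FF_p$-subspace of $\Lambda/p\Lambda \cong \FF_p^2$, hence a line, and conversely every line lifts to such a $\Lambda'$. The map $\Lambda' \mapsto \Lambda'/p\Lambda$ is thus a bijection between lattices in the sandwich and $\PP^1(\FF_p)$. I would then check that two such sandwiched lattices lie in the same homothety class only if they are equal: a scalar $c \in \Qp^\times$ with $c\Lambda_1' = \Lambda_2'$ must satisfy $\val_p(c) = 0$ for reasons of lattice volume, so $c\in\Zp^\times$ and $c\Lambda_1' = \Lambda_1'$. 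This yields exactly $|\PP^1(\FF_p)| = p+1$ neighbors.

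For (ii), given two classes $[\Lambda_1]$ and $[\Lambda_2]$, I would apply the elementary divisor theorem for the PID $\Zp$ to the pair $(\Lambda_1, \Lambda_2)$: after scaling $\Lambda_2$ by a power of $p$ so that $\Lambda_2 \subseteq \Lambda_1$, there exists a basis $f_1, f_2$ of $\Lambda_1$ and integers $0 \le a \le b$ with $\Lambda_2 = \Zp p^a f_1 + \Zp p^b f_2$. Scaling again by $p^{-a}$ reduces to the case $\Lambda_2 = \Zp f_1 + \Zp p^n f_2$ with $n = b-a \ge 0$. Then the chain
\[
\Lambda_1 \supsetneq \Zp f_1 + \Zp p f_2 \supsetneq \cdots \supsetneq \Zp f_1 + \Zp p^n f_2 = \Lambda_2
\]
gives a path in $\cT$ from $[\Lambda_1]$ to $[\Lambda_2]$ of length $n$.

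For (iii), I would define $d([\Lambda_1],[\Lambda_2]) := n$ using the invariant $n$ produced above, checking it is well-defined on homothety classes and symmetric. The construction in (ii) shows adjacent vertices satisfy $d = 1$, and one readily verifies the triangle inequality $d(u,w) \le d(u,v) + d(v,w)$ by composing invariant factor decompositions. The crucial step, which I view as the main obstacle, is to show that if $[\Lambda]$ and $[\Lambda']$ are adjacent then for every third vertex $[\Lambda'']$ we have $d([\Lambda''],[\Lambda']) = d([\Lambda''],[\Lambda]) \pm 1$. I would prove this by placing all three classes in a common basis: normalize so $\Lambda = \Zp f_1 + \Zp f_2$, $\Lambda' = \Zp f_1 + \Zp p f_2$, and use the invariant factors of $\Lambda''$ with respect to $\Lambda$ together with the change of basis sending $\Lambda$ to $\Lambda'$ to read off the invariant factors of $\Lambda''$ with respect to $\Lambda'$. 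From this parity statement it follows that along any edge path the distance from a fixed start vertex changes by $\pm 1$ at each step, so any closed edge path has even length and in fact cannot close up nontrivially; equivalently, between any two vertices the path constructed in (ii) is the unique shortest one, so $\cT$ is a tree.
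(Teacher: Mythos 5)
Your parts (i) and (ii) are sound (in (i) you may want to add the one-line remark that a sandwiched lattice has index $p$ in $\Lambda$, an odd power of $p$, so it is not homothetic to $\Lambda$ and the edge is not a loop); note that the paper itself offers no argument here, simply citing \cite{DT08}, so a self-contained proof is a genuinely different route. The problem is in (iii). The statement you single out as the crucial step --- that $d(\,\cdot\,,[\Lambda''])$ changes by exactly $\pm 1$ across every edge --- is true and provable by your invariant-factor normalization, but it does not imply acyclicity, and neither do the other properties you establish (symmetry, adjacency $\Leftrightarrow d=1$, the triangle inequality). An even cycle graph equipped with its own graph distance satisfies all of these properties, so no purely formal argument from them can yield ``cannot close up nontrivially''; that final inference is a non sequitur, and it is exactly where the tree property has to be earned.

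What you need is the stronger local statement: for a fixed base vertex $[\Lambda_0]$ and a vertex $[\Lambda]$ with $d([\Lambda_0],[\Lambda])=n\geq 1$, exactly one of the $p+1$ neighbours of $[\Lambda]$ is at distance $n-1$, and the other $p$ are at distance $n+1$. This is again accessible to your method: normalize $\Lambda_0=\Zp e_1+\Zp e_2$ and $\Lambda=\Zp e_1+\Zp p^n e_2$, list the $p+1$ sandwiched lattices as $\Zp(e_1+\lambda p^n e_2)+\Zp p^{n+1}e_2$ for $\lambda\in\FF_p$ together with $\Zp p e_1+\Zp p^n e_2$, and compute elementary divisors relative to $\Lambda_0$: the former all have divisors $(1,p^{n+1})$, hence distance $n+1$, while only the last is homothetic to $\Zp e_1+\Zp p^{n-1}e_2$, at distance $n-1$. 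Granting this, acyclicity follows by a maximum argument rather than a parity argument: in a closed non-backtracking edge path through $[\Lambda_0]$, choose a vertex at maximal distance $m\geq 1$ from $[\Lambda_0]$; its two neighbours on the path are both at distance $m-1$, hence by uniqueness they coincide, contradicting non-backtracking. (Equivalently, uniqueness of the descending neighbour shows that non-backtracking paths are geodesics, which is how the standard references, e.g.\ Serre's \emph{Trees} II.1 or the cited \cite{DT08}, conclude.) So your outline is salvageable, but the $\pm1$ parity claim must be upgraded to the uniqueness of the closer neighbour before the last sentence of (iii) is justified.
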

\begin{proof}
See~\cite[Section 1.3.1, Proposition 8]{DT08}.
\end{proof}

The group $\GL_2(\Qp)$ acts on $\cT$: the action on vertices is given by matrix multiplication. This preserves adjacency and thus describes an action on $\cT$ by graph automorphisms. If $A \in \GL_2(\Qp)$ then $[A]$ will denote the homothety class of the lattice in $\Qp^2$ that is spanned by the columns of $A$. The map $A \mapsto [A]$ induces an equivariant bijection between $\GL_2(\Qp)/\Qp^\times\GL_2(\Zp)$ and $V(\cT)$ for the natural left actions of $\GL_2(\Qp)$.

Let $(v_0, v_1)$ denote the ordered edge of $\cT$ from the distinguished vertex $v_0 = [\Zp^2]$ to the vertex $ v_1 = \smtx 100p v_0$. The stabilizer of $v_1$ for the action of $\GL_2(\Qp)$ is precisely
\[
  \Qp^\times \cdot \smtx 100p \GL_2(\Zp)\smtx 100p^{-1}.
\]
Thus, if one writes
\begin{align*}
  \Gamma_0(p\ZZ_p)& = \GL_2(\Zp) \cap \left(\smtx 100p \GL_2(\Zp)\smtx 100p^{-1}\right) \\
&= \left\{\smtx abcd  \in \GL_2(\Zp) ~\Big{|}~ p | c \right\},
\end{align*}
then the stabilizer of $(v_0,v_1)$ for the action of $\GL_2(\Qp)$ is $\Qp^\times\cdot \Gamma_0(p\ZZ_p)$. The map $A \mapsto \left([A], \left[A\smtx 100p \right]\right)$ yields a bijection
\begin{equation}
\label{eq:ordedgeid}
\GL_2(\Qp)/\Qp^\times\cdot\Gamma_0(p\ZZ_p) \stackrel{\sim}{\to} E(\cT) 
\end{equation}
that is equivariant for the left action of $\GL_2(\Qp)$. The vertx $v_0$ will be refered to as the \emph{privileged vertex} and the ordered edge $(v_0, v_1)$ will be refered to as the \emph{privileged edge}. 

\begin{lem}
\label{lem:redlem}
There is a set of coset representatives $\{e_i\}_i$ for $\GL_2(\QQ_p)/\QQ_p^\times\cdot\Gamma_0(p\ZZ_p)$ given by matrices with coefficients in $\ZZ$. Moreover, there is an effective algorithm that, given any matrix in $g \in$ $\GL_2(\QQ_p)$, finds a corresponding scalar $\lambda\in\QQ_p^\times$ and matrix $t\in\Gamma_0(p\ZZ_p)$ satisfying $g\lambda t= e_i$. An analogous statement holds for $\GL_2(\Qp)/\Qp^\times\cdot\GL_2(\Zp)$.
\end{lem}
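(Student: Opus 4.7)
The plan is to exhibit Hermite-normal-form style representatives. Under the identification $A \mapsto [A]$, the coset space $\GL_2(\Qp)/\Qp^\times \cdot \GL_2(\Zp)$ corresponds bijectively to the set of homothety classes of rank-$2$ $\Zp$-lattices in $\Qp^2$. Every such lattice $\Lambda$ admits a $\Zp$-basis of the form $\{p^a e_1,\, b e_1 + p^c e_2\}$ with $a,c \in \ZZ_{\geq 0}$, $\min(a,c)=0$, and $b \in \Zp$, obtained from the column Hermite normal form over the DVR $\Zp$. Reducing $b$ modulo $p^c$ lets us choose it from $\{0,1,\ldots,p^c-1\} \subset \ZZ$. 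The resulting matrix $\smtx{p^a}{b}{0}{p^c}$ is an integer-entry representative, which settles the first statement for $\GL_2(\Zp)$.

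For the effective algorithm, given $g \in \GL_2(\Qp)$ known to sufficient $p$-adic precision, I first multiply by the unique power $\lambda = p^N$ that makes $g\lambda \in M_2(\Zp)$ with at least one entry in $\Zp^\times$. Then I perform right multiplication by elementary matrices in $\GL_2(\Zp)$---the $p$-adic Euclidean algorithm---to clear the lower-left entry and finally reduce the upper-right entry modulo $p^c$ to land in $\ZZ$. The running product $t$ of these elementary matrices, together with $\lambda$, is the reduction data satisfying $g\lambda t = e_i$.

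To pass from $\GL_2(\Zp)$ to $\Gamma_0(p\Zp)$, one uses the index-$(p{+}1)$ inclusion $\Gamma_0(p\Zp) \subset \GL_2(\Zp)$ with $\GL_2(\Zp)/\Gamma_0(p\Zp) \cong \PP^1(\FF_p)$. Fix once and for all a set $\{s_0,\ldots,s_p\}$ of $p+1$ integer-entry coset representatives, for instance $\smtx{1}{0}{j}{1}$ for $0 \leq j \leq p-1$ together with $\smtx{0}{1}{1}{0}$. After producing $g\lambda t^{(0)} = e_i^{(0)}$ with $t^{(0)} \in \GL_2(\Zp)$ as in the previous paragraph, I would decompose $t^{(0)} = s_k\, t'$ with $t' \in \Gamma_0(p\Zp)$ by reducing $t^{(0)}$ modulo $p$ and looking up its image in $\PP^1(\FF_p)$; then $g\lambda t' = e_i^{(0)} s_k^{-1}$ is the desired integer-entry representative. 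Via the bijection \eqref{eq:ordedgeid}, this last step is simply a choice of which of the $p+1$ ordered edges incident to the vertex $[e_i^{(0)}]$ is represented by $g$. The main obstacle is not mathematical but algorithmic bookkeeping: all intermediate data need only be computed modulo a bounded power of $p$ determined by $v_p(\det g)$, so the procedure terminates in finitely many operations, and no deeper content is required beyond Hermite normal form over a DVR together with the standard Iwahori decomposition.
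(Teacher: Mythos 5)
Your overall strategy (Hermite normal form over $\Zp$ for the $\GL_2(\Zp)$-cosets, followed by an Iwahori refinement via $\PP^1(\FF_p)$) is viable, but two steps fail as written. The first is the reduction modulus for the off-diagonal entry. Since lattices are spanned by columns and the cosets are $g\,\Qp^\times\GL_2(\Zp)$, one has $\smtx{p^a}{b}{0}{p^c}^{-1}\smtx{p^a}{b'}{0}{p^c}=\smtx{1}{(b'-b)/p^a}{0}{1}$, which lies in $\GL_2(\Zp)$ exactly when $p^a\mid b'-b$; so $b$ is well defined modulo $p^a$ (the diagonal entry in the same \emph{row}), not modulo $p^c$. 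With your normalization $\min(a,c)=0$, reducing modulo $p^c$ collapses your list to diagonal classes only: the class of $\smtx{p}{1}{0}{1}$, one of the $p+1$ neighbours of the standard vertex, is represented by no matrix in your family, while the $p$ matrices $\smtx{1}{b}{0}{p}$ with $0\le b<p$ that your family does contain all represent the same class as $\smtx{1}{0}{0}{p}$. The same wrong modulus reappears in the algorithmic step ``reduce the upper-right entry modulo $p^c$.'' The fix is to reduce modulo $p^a$; the correct normalized list is $\smtx{p^a}{b}{0}{1}$ with $0\le b<p^a$ together with $\smtx{1}{0}{0}{p^c}$.

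The second problem is a left/right coset slip in the passage to $\Gamma_0(p\ZZ_p)$. From $g\lambda t^{(0)}=e_i^{(0)}$ and your factorization $t^{(0)}=s_k t'$ with $t'\in\Gamma_0(p\ZZ_p)$ you only get $g\lambda=e_i^{(0)}t'^{-1}s_k^{-1}$, hence $g\lambda t'=e_i^{(0)}t'^{-1}s_k^{-1}t'$, which is not $e_i^{(0)}s_k^{-1}$ unless $t'$ and $s_k$ commute. What you need is the decomposition $\GL_2(\Zp)=\coprod_k\Gamma_0(p\ZZ_p)s_k$, i.e.\ $t^{(0)}=t's_k$ with $t'\in\Gamma_0(p\ZZ_p)$; then indeed $g\lambda t'=e_i^{(0)}s_k^{-1}$ has integer entries, and distinct pairs $(i,k)$ give distinct cosets because $\Qp^\times\Gamma_0(p\ZZ_p)\cap\GL_2(\Zp)=\Gamma_0(p\ZZ_p)$, matching the $p+1$ ordered edges at each vertex. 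With these two corrections your argument goes through, and it is organized differently from the paper's proof, which skips the vertex case and the Iwahori refinement by directly exhibiting lower- and anti-triangular representatives $\smtx{p^m}{0}{r}{p^n}$, $0\le r<p^{n+1}$, and $\smtx{0}{p^m}{p^n}{r}$, $0\le r<p^n$, through explicit unit inversions modulo powers of $p$.
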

\begin{proof}
We will show that the matrices $e_i$ may be taken of the form:
\begin{align*}
\smtx{p^m}{0}{r}{p^n} &\quad 0\leq r < p^{n+1},&\smtx{0}{p^m}{p^n}{r}&\quad 0\leq r < p^n,
\end{align*}
for integers $m$, $n\geq 0$. Since we can scale by elements of $\QQ_p^\times$ we may assume without loss of generality that the matrix $g \in \GL_2(\QQ_p)$ belongs to $\GL_2(\ZZ_p)$ and that one of its entries has valuation $0$. Write $g = \smtx abcd$. Suppose that $\alpha = \val_p(a)\leq \beta = \val_p(b)$. Rescaling by an element in $\ZZ_p^\times$ allows us to assume that $c$ is a rational integer. Write $a = r p^\alpha$ for $r\in \ZZ_p^\times$, set $N=\val_p(ad-bc)-\alpha$, and let $s\in \ZZ$ satisfy $rs\equiv 1\pmod{p^{N+1}}$. Such an $s$ exists because $r$ is a $p$-adic unit. Define $c'$ as the integer in the range $\{1,\ldots,p^{N+1}\}$ such that $c'\equiv cs\pmod{p^{N+1}}$.
If we write
\[
g'=\smtx{p^{\alpha}}{0}{c'}{p^N},
\]
then one sees that $g'\Gamma_0(p\ZZ_p) = g\Gamma_0(p\ZZ_p)$.

Suppose now that $\alpha>\beta$. In that case we write $b = r p^\beta$ for $r\in\ZZ_p^\times$ and set $N=\val_p(ad-bc)-\beta$. Let $s\in\ZZ$ be an element such that $rs\equiv 1\pmod{p^N}$ and define $d'$ to be the integer in $\{1,\ldots,p^N\}$ such that $d'\equiv ds\pmod{p^N}$. If we write
\[
g'=\smtx{0}{p^{\beta}}{p^N}{d'},
\]
then one sees that $g'\Gamma_0(p\ZZ_p) = g\Gamma_0(p\ZZ_p)$.

A similar and slightly simpler proof applies to vertices.
\end{proof}
\begin{rmk}
\label{rmk:matrixreps}
The algorithm described in the proof of Lemma~\ref{lem:redlem} is used to encode the $\Gamma$-action on $\cT$ in terms of a fixed set of matrix representatives for the vertices and edges of $\cT$. The \emph{normalization} of a matrix in $\GL_2(\Qp)$ refers to the orbit representative of Lemma~\ref{lem:redlem} that represents the same vertex or edge as the given matrix. Note that since we will be working with a fixed $p$-adic precision, the complexity of normalization can be regarded as bounded by an absolute constant that depends only on the precision.
\end{rmk}

\subsection{Quaternionic action on  $\cT$}
\label{ssec:quataction}
Let $B/\QQ$ denote a definite quaternion algebra that is split at $p$. If $\ell$ is a place of $\QQ$, then we write $B_\ell = B \otimes_\QQ \QQ_\ell$. Thus $B_\infty$ is isomorphic with the Hamilton quaternions and $B_p \cong M_2(\Qp)$. Let $N^-$ denote the discriminant of $B$. Note that $N^-$ is coprime to $p$.

Let $\Rmax \subseteq B$ denote a maximal $\ZZ[1/p]$-order. Let $N^+$ denote a positive integer that is coprime to $pN^-$, and let $R \subseteq \Rmax$ denote an Eichler $\ZZ[1/p]$-order of level $N^+$. Since $B$ is unramified at $p$, it satisfies the Eichler condition for $\ZZ[1/p]$-orders and there hence exists a unique, up to conjugation by $B^\times$, Eichler $\ZZ[1/p]$-order of each level. Let $\iota$ denote a splitting isomorphism $\iota \colon B_p \cong M_2(\Qp)$ that satisfies $\iota(R_p^\text{max}) = M_2(\Zp)$. Let $\Gamma=\Gamma(p,N^-,N^+)$ denote the subgroup of elements of reduced norm $1$ in $R$. The group $\Gamma$ acts on $\cT$ via the splitting $\iota$.

A discrete subgroup of $\GL_2(\Qp)$ is said to be \emph{Schottky} if it acts without fixed points on the vertices of the Bruhat-Tits tree.
\begin{prop}
\label{p:quotproperties}
\begin{enumerate}
The following facts about $\Gamma$ are true.
\item The group $\Gamma $ is a finitely generated discrete subgroup of $\GL_2(\Qp)$ and the quotient $\Gamma \backslash \cT$ is finite.
\item There exists an integer constant $M \geq 1$ depending only on $pN^-$ such that if $N^+ \geq M$, the group $\Gamma$ is Schottky. In this case the abelianization of $\Gamma$ is a finite free $\ZZ$-module of rank $g = 1-V+E$, where $V$ and $E$ denote respectively the number of vertices and edges of $\Gamma\backslash \cT$.
\end{enumerate}
\end{prop}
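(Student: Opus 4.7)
The plan is to prove the two parts separately, relying on strong approximation for the norm-one group of $B$ and on Bass--Serre theory.

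For part (1), discreteness of $\Gamma$ in $\GL_2(\Qp)$ reduces to the compactness of $B_\infty^1$ (definite case) via a standard trace argument: if $\gamma_n\in\Gamma$ converge $p$-adically to the identity, then applying the product formula to the reduced characteristic polynomial of $\gamma_n-1$ and using that $B_\infty^1$ is compact forces $\gamma_n=1$ for large $n$. Finiteness of $\Gamma\backslash\cT$ follows from strong approximation for $B^1$ (valid since $B_p^1=\SL_2(\Qp)$ is non-compact): this identifies the vertices and edges of $\Gamma\backslash\cT$, via the equivariant bijections~\eqref{eq:ordedgeid}, with adelic double cosets of the form $B^\times\backslash\widehat{B}^\times/\widehat{R}_0^\times$ for an appropriate Eichler $\ZZ$-order $R_0$ of level $pN^+$ (respectively $p^2N^+$), and these are finite by Eichler's class-number theory for definite quaternion algebras. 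Finite generation of $\Gamma$ then follows from Bass--Serre theory: $\Gamma$ acts on the tree $\cT$ with finite quotient and finite stabilizers (intersections of $\Gamma$ with the compact stabilizers in $\GL_2(\Qp)$), so $\Gamma$ is the fundamental group of a finite graph of finite groups, hence finitely presented.

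For part (2), an element $\gamma\in\Gamma$ fixes a vertex of $\cT$ exactly when $\iota(\gamma)$ lies in a $\GL_2(\Qp)$-conjugate of $\Qp^\times\GL_2(\Zp)$; combined with $\gamma$ being a reduced-norm-one unit in a global order of a definite quaternion algebra, this forces $\gamma$ to have finite order. So (modulo the central $\{\pm 1\}$, which acts trivially on $\cT$) the Schottky condition is equivalent to $\Gamma$ being torsion-free up to center. Non-central torsion in $\Gamma$ corresponds to embeddings into $R$ of cyclotomic rings $\ZZ[\zeta_n]$, with the finite list of possible $n$ depending only on the ramification data of $B$, i.e.\ on $pN^-$. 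For each such $n$, the embedding is obstructed by imposing suitable local Eichler-level conditions at primes dividing $N^+$, using the local description of Eichler orders at split primes as stabilizers of a pair of vertices in the local tree. Hence there exists a constant $M$, depending only on $pN^-$, such that whenever $N^+\geq M$ every non-central torsion element is killed and $\Gamma$ becomes Schottky.

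Once torsion-freeness (up to center) is established, $\Gamma/\{\pm 1\}$ acts freely on $\cT$, so the projection $\cT\to\Gamma\backslash\cT$ is a topological covering of connected graphs. Since $\cT$ is a tree, hence simply connected, it is the universal cover, and $\Gamma$ is identified with the fundamental group of the finite graph $\Gamma\backslash\cT$. The fundamental group of a finite connected graph with $V$ vertices and $E$ edges is free of rank $E-V+1$, so $\Gamma$ is free of rank $g=1-V+E$ and its abelianization is $\ZZ^g$, as claimed.

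The main obstacle is making the constant $M$ effective: this requires enumerating the CM quadratic orders embedding into the definite quaternion algebra $B$ of discriminant $N^-$ (a finite list controlled by $N^-$ and $p$), and then, for each, determining a prime $\ell\nmid pN^-$ at which the local Eichler-level condition obstructs the embedding. The remaining ingredients are routine applications of strong approximation, class-number finiteness in definite quaternion orders, and the dictionary between free actions on trees and free groups.
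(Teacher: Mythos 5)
Your part (1) is essentially the standard argument and is correct: discreteness follows from definiteness of $B$ via your product-formula argument (note one uses that $B$ is a division algebra, so $\trd(\gamma-1)=\nrd(\gamma-1)=0$ forces $\gamma=1$); finiteness of $\Gamma\backslash\cT$ follows from strong approximation plus finiteness of class sets of Eichler orders; and finite generation follows from Bass--Serre theory, since every stabilizer is the intersection of the discrete group $\Gamma$ with a compact subgroup and is therefore finite. One small slip: the vertex stabilizer $\Qp^\times\GL_2(\Zp)$ corresponds at $p$ to a \emph{maximal} local order, so the relevant class sets are those of Eichler $\ZZ$-orders of level $N^+$ (vertices) and $pN^+$ (edges), not $pN^+$ and $p^2N^+$; this does not affect finiteness. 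For comparison, the paper gives no argument at all here, citing Gerritzen--van der Put, Section I.3.

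Part (2) has a genuine gap at the decisive step. You correctly reduce the Schottky property (modulo the center $\{\pm 1\}$, which must indeed be factored out since $-1\in\Gamma$ always acts trivially) to the absence of non-central torsion, i.e.\ to the non-existence of embeddings of $\ZZ[1/p][i]$ and $\ZZ[1/p][\zeta_3]$ into $R$. But the inference ``the embedding is obstructed by local conditions at primes dividing $N^+$, hence some $M=M(pN^-)$ works for all $N^+\geq M$'' is a non sequitur: the local obstruction at a prime $\ell\mid N^+$ is a congruence condition on $\ell$ (that $\ell$ be inert in $\QQ(i)$ or $\QQ(\sqrt{-3})$), and making $N^+$ large does not force any of its prime divisors to satisfy such a condition. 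In fact the step cannot be repaired as written: by Eichler's theory of optimal embeddings (the same computation that produces the terms $e_3,e_4$ in Theorem~\ref{thm:genusformula}), if $\QQ(i)$ embeds into $B$ --- a condition depending only on $N^-$, satisfied e.g.\ for $N^-=2$ --- then for every $N^+$ all of whose prime factors are $\equiv 1\pmod 4$ the order $\ZZ[1/p][i]$ embeds into $R$ (all local embedding numbers are positive, and all Eichler $\ZZ[1/p]$-orders of level $N^+$ are conjugate), so $\Gamma$ contains an element of order $4$ fixing a vertex and is not Schottky; such $N^+$ are arbitrarily large. So the literal statement ``$N^+\geq M$ implies Schottky'' is not provable by this route; what is true, and what the paper actually uses later (Remark~\ref{rmk:nonschottky}), is that $\Gamma$ always contains a normal Schottky subgroup of finite index, which your torsion analysis does yield, e.g.\ by imposing auxiliary level at a prime inert in both $\QQ(i)$ and $\QQ(\sqrt{-3})$, or by passing to any torsion-free normal subgroup of finite index, which then acts freely on $\cT$ and is free. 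Your concluding covering-space argument is fine, but it applies to $\Gamma/\{\pm 1\}$: since $-1\in\Gamma$, the group $\Gamma$ itself is never free and its abelianization acquires an extra $\ZZ/2\ZZ$ factor; that imprecision is shared with the statement of the proposition, whereas the jump from congruence obstructions at primes dividing $N^+$ to ``all $N^+\geq M$'' is a real gap in your argument.
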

\begin{proof}
See~\cite[Section~I.3]{gvdp80}.
\end{proof}

The main result of this paper is the description of an algorithm that outputs:
\begin{enumerate}
\item A connected subtree $\cD$ of $\cT$ which is a fundamental domain for the action of $\Gamma$ on $\cT$.
\item The map $\cT\to\Gamma\backslash\cT$ or, in other words, an efficient way to find, given $\sigma$ in $\cT$ a vertex or an edge, a corresponding vertex or edge $\ol\sigma$ in $\cD$ together with an element $\gamma\in\Gamma$ satisfying $\gamma \sigma = \ol \sigma$.\end{enumerate}

Our interest in the quaternionic group $\Gamma$ arises from the fact that $\Gamma \backslash \cT$ describes a bad special fiber of a Shimura curve, cf.~\cite{boutot-carayol}. In the Schottky case the corresponding special fiber has irreducible components in bijection with the vertices of $\Gamma \backslash \cT$. Each component is isomorphic with $\PP^1$ over $\FF_{p^2}$, and two components meet in an ordinary double point defined over $\FF_{p^2}$ if and only if the corresponding vertices of $\Gamma \backslash \cT$ are joined by an edge. Thus, Algorithm \ref{alg:fdomain} below computes the bad special fibers of integral models of Shimura curves over $\QQ$. In the final section we will describe how this algorithm can be applied to compute equations for integral models of Shimura curves.

\subsection{The $p$-adic upper half plane}
\label{sec:puhpdef}
Let $\puhp = \PP^1(\Cp) - \PP^1(\Qp)$ denote the \padic upper half plane, which is a Stein domain in the rigid analytic variety $\PP^1(\Cp)$, and thus inherits a structure of rigid analytic variety. The group $\GL_2(\Qp)$ acts on $\puhp$ via rigid analytic automorphisms via fractional linear transformation. For details consult \cite{DT08} or \cite[Chapter 5]{darmonbook}.

Let $\cA_0 \subseteq\puhp$ denote the following affinoid subset of $\PP^1(\Cp)$:
\[
  \cA_0 = \{z \in \PP^1(\Cp) ~|~ \abs{z} \leq 1, \abs{z-a} \geq 1 \text{ for } a = 0, \ldots, p-1\}. 
\]
The orbit of $\cA_0$ under the action of $\GL_2(\Qp)$ nearly tesselates $\puhp$: all that is missed is a collection of bounding annuli. This decomposition of $\puhp$ is closely related to the \emph{reduction map}: let $\cT_\QQ$ denote the topological realization of $\cT$ obtained by glueing rational intervals $[0,1] \cap \QQ$ according to the adjacency relations of $\cT$.
\begin{prop}
\label{p:reductionmap}
There is a continuous and surjective $\GL_2(\Qp)$-equivariant map
\[
  \red \colon \puhp \to \cT_\QQ
\]
that collapses affinoids $\gamma \cdot \cA_0$ for $\gamma \in \GL_2(\Qp)$ to vertices of $\cT_\QQ$, and which maps points in the bounding annuli to the edges of $\cT_\QQ$.
\end{prop}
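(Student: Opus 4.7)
The plan is to build the map on a fundamental piece and then extend by the $\GL_2(\Qp)$-action. The essential fact is that the standard affinoid $\cA_0$ deserves to be the preimage of the privileged vertex $v_0$, and the bounding annuli of $\cA_0$ inside $\puhp$ correspond naturally to the $p+1$ edges of $\cT$ meeting $v_0$.

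First I would identify the stabilizer of $\cA_0$ in $\GL_2(\Qp)$. The complement of $\cA_0$ in $\PP^1(\Cp)$ is a disjoint union of $p+1$ open residue disks, parametrized by $\PP^1(\FF_p)$ (one disk around each $a\in\{0,\ldots,p-1\}$ and one around $\infty$). The group $\GL_2(\Zp)$ acts on this set of disks through its reduction to $\GL_2(\FF_p)$, which is transitive on $\PP^1(\FF_p)$; together with scaling by $\Qp^\times$ this shows $\Qp^\times \GL_2(\Zp)\subseteq \Stab_{\GL_2(\Qp)}(\cA_0)$. For the reverse inclusion, observe that any element preserving $\cA_0$ permutes the $p+1$ residue disks, hence (after multiplying by a scalar and an element of $\GL_2(\Zp)$) fixes all of them pointwise on the boundary and therefore lies in $\Qp^\times\GL_2(\Zp)$. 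Since this matches $\Stab_{\GL_2(\Qp)}(v_0)$, the rule $\red(\gamma z)=\gamma v_0$ for $z\in\cA_0$ and $\gamma\in\GL_2(\Qp)$ is well defined on the orbit $\GL_2(\Qp)\cdot\cA_0$, and is $\GL_2(\Qp)$-equivariant by construction.

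Next I would handle the bounding annuli. The open annulus $C_a = \{z\in\Cp : 1/p < |z-a| < 1\}$ for $a\in\{0,\ldots,p-1\}$, and the analogous annulus $C_\infty$ at infinity, together with the orbit of $\cA_0$ cover $\puhp$. To $C_a$ I attach the edge $e_a$ joining $v_0$ to $\smtx{1}{a}{0}{p}v_0$, and define $\red$ on $C_a$ by sending $z$ to the point of $e_a\cong[0,1]\cap\QQ$ with parameter $t = -\ord_p(z-a)$ (and similarly $z\mapsto -\ord_p(1/z)$ on $C_\infty$). These formulas agree with the vertex definition on the two boundary annuli of each $C_a$ (where $|z-a|\to 1$ or $|z-a|\to 1/p$), so the map glues continuously on $\cA_0\cup\bigcup_a C_a\cup C_\infty$. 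Since $\Gamma_0(p\ZZ_p)$ stabilizes $v_0$, permutes the $C_a$'s compatibly with its action on the edges $e_a$, and fixes $C_\infty$ setwise (acting through the valuation), the local definition is $\Qp^\times\Gamma_0(p\ZZ_p)$-equivariant. I then extend by $\GL_2(\Qp)$-equivariance using the bijection \eqref{eq:ordedgeid} for edges and the analogous bijection for vertices.

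The step I expect to require the most care is verifying that the $\GL_2(\Qp)$-equivariant extension is consistent, i.e.\ that two different ways of writing a point as $\gamma z$ with $z$ in $\cA_0$ or in some $C_a$ produce the same image in $\cT_\QQ$. This reduces to checking, for each pair of adjacent affinoids $\cA_0$ and $g\cdot\cA_0$ (with $g$ sending $v_0$ to an adjacent vertex), that the two candidate definitions of $\red$ on the annulus separating them coincide; this in turn follows from the valuation formula above and the explicit description of the stabilizers of edges via Lemma~\ref{lem:redlem}. Once consistency is established, continuity is local and follows from the continuity of $\ord_p$ on each annulus; surjectivity onto vertices is automatic from transitivity of $\GL_2(\Qp)$ on $V(\cT)$, and surjectivity onto the interior of each edge follows from the fact that $-\ord_p(z-a)$ attains every value in $(0,1)\cap\QQ$ as $z$ ranges over $C_a\cap\puhp$.
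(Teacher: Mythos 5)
The paper itself gives no proof of this proposition: it is quoted as standard, with details deferred to \cite{DT08} and \cite[Chapter 5]{darmonbook}. Your outline follows the usual construction (define $\red$ on $\cA_0$ and its bounding annuli, then spread out by equivariance), but as written it contains a concrete error and a genuine gap. The error: with the paper's conventions ($[A]$ is the class of the lattice spanned by the \emph{columns} of $A$), the matrices $\smtx{1}{a}{0}{p}$ all represent the \emph{same} vertex, namely $v_1=\smtx 100p v_0$, since the column span of $\smtx{1}{a}{0}{p}$ equals that of $\smtx 100p$. So your edges $e_a$ all coincide with the privileged edge, which points toward the residue disk at $\infty$, not toward the disk at $a$; the neighbor you want is $\smtx{p}{a}{0}{1}v_0$. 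Relatedly, for $z\in C_a$ one has $0<\ord_p(z-a)<1$, so the parameter should be $t=\ord_p(z-a)$ (and $t=-\ord_p(z)$ on $C_\infty$); your $t=-\ord_p(z-a)$ lies in $(-1,0)$. As stated, the recipe sends all $p+1$ annuli to the single edge $(v_0,v_1)$ while $\GL_2(\Zp)$ permutes the $p+1$ neighbors of $v_0$, so the local model is not $\GL_2(\Zp)$-equivariant and the proposed equivariant extension is already inconsistent (e.g.\ $C_0=\smtx p001 C_\infty$ would have to map into the edge from $v_0$ to $\smtx p001 v_0$, not into $(v_0,v_1)$). These points are repairable, but they must be fixed for the construction to make sense.

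The genuine gap is well-definedness of the equivariant extension. Knowing $\Stab_{\GL_2(\Qp)}(\cA_0)=\Qp^\times\GL_2(\Zp)=\Stab_{\GL_2(\Qp)}(v_0)$ is not enough: to set $\red(\gamma z)=\gamma v_0$ you need that $\gamma_1 z_1=\gamma_2 z_2$ with $z_1,z_2\in\cA_0$ forces $\gamma_2^{-1}\gamma_1\in\Qp^\times\GL_2(\Zp)$, i.e.\ that distinct translates $\gamma\cA_0$ are pairwise disjoint, and likewise that translates of the standard annulus meet the configuration only in the prescribed way; you also need (and only assert) that the $\GL_2(\Qp)$-orbit of $\cA_0\cup\bigcup_a C_a\cup C_\infty$ covers $\puhp$ — note that your literal claim, that the orbit of $\cA_0$ together with just these $p+1$ annuli covers $\puhp$, is false. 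Your reduction of the consistency check to ``adjacent affinoids'' plus Lemma~\ref{lem:redlem} does not supply this: that lemma is a purely group-theoretic normal-form statement about coset representatives and says nothing about geometric overlaps of affinoids. The standard way to avoid all of this (and the route in the cited references) is to define $\red$ intrinsically — for instance, send $z\in\puhp$ to the similarity class of the norm $(x,y)\mapsto |xz-y|$ on $\Qp^2$, using the Goldman--Iwahori description of the realization of $\cT$ by classes of norms — so that equivariance and well-definedness are automatic, and then verify that the fiber over $v_0$ is exactly $\cA_0$ and that the open annuli map onto the open edges; alternatively, you must prove the disjointness and covering statements directly.
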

For subgroups $\Gamma \subseteq \GL_2(\Qp)$, the quotient $\Gamma \backslash \uhp_p$ is compact if and only if $\Gamma \backslash \cT$ is a finite graph. The reduction map expresses the Bruhat-Tits tree as a skeleton of the $p$-adic upper half plane.

\section{Computing fundamental domains}
\label{sec:effcomp}
This section describes a computable criterion for determining whether two vertices or edges in the Bruhat-Tits tree are $\Gamma$-equivalent. This is then used to describe an algorithm that finds a fundamental domain for $\Gamma$ in $\cT$. 

\subsection{Solving the \texorpdfstring{$\Gamma$-equivalence}{equivalence} 
problem}
\label{ssec:equivprob}
Define $B/\QQ$, $N^-$, $N^+$, $R$, $\iota$ and $\Gamma$ as in Subsecton~\ref{ssec:quataction}. Write by $\nrd(x)$ the reduced norm of an element $x\in B$. Concretely, $\nrd$ is the composition $\det\circ \iota$. The group $\Gamma$ possesses an increasing filtration by finite sets which can be used to aid computations with the group. For each integer $n \geq 0$ set
\[
  \Gamma_n = \left\{\iota\left(\frac{x}{p^n}\right) ~\Bigg{|}~ x \in R \text{~and~} \nrd(x) = p^{2n}\right\}.
\]
Then $\Gamma_n \subseteq \Gamma_{n+1}$ since $x/p^n = (px)/p^{n+1}$ and 
$\Gamma = \bigcup_{n \geq 0} \Gamma_n$. Note that each $\Gamma_n$ is a finite set since $B$ is definite. 

Following~\cite{bocklebutenuth}, we introduce some generic notation for group actions. If $G$ is a group and $X$ is a left $G$-set, and if $u$ and $v$ are elements of $X$, then we write $\Hom_G(u,v)$ to denote the collection of all elements of $G$ that move $u$ to $v$. We also write $\Stab_G(u) = \Hom_G(u,u)$.

The central problem that arises when computing a fundamental domain for $\Gamma \backslash \cT$ is the computation of the sets $\Hom_\Gamma(u,v)$ for two vertices $u$ and $v$ of $\cT$ (or for two ordered edges). Our solution to this problem uses short vector searches in certain $\ZZ$-lattices of rank $4$. In the following exposition of our method we concentrate on the case of vertices. The case of ordered edges is treated analogously.

Assume that two vertices $u$ and $v$ of $\cT$ are represented by reduced matrices as in Lemma~\ref{lem:redlem}. We will abuse notation and denote the representing matrices also as $u$ and $v$. Define $m$ via the formula $2m=\val_p(\det u\det v)$. The integer $m$ is the half-length of the path that joins the vertex $u$ to the vertex $v$ passing through the privileged vertex $v_0$. Write $p^{a}=\det u$ and $p^{b}=\det v$, so that $2m=a+b$. Note that
\begin{align}
\label{eq:lattices}
  \Hom_\Gamma(u,v) &= \Hom_{\GL_2(\Qp)}(u,v) \cap \Gamma\\\notag
&= \left(v\Stab_{\GL_2(\Qp)}(v_0)u^{-1}\right)\cap \Gamma\\\notag
&= \left(v\Qp^\times\GL_2(\Zp)u^{-1} \right)\cap \Gamma.
\end{align}

\begin{lem}
If $m$ is not an integer then 
$\Hom_\Gamma(u,v) = \emptyset$. Otherwise one has
\[
\Hom_\Gamma(u,v) = p^{-m}vM_2(\ZZ_p)u^*\cap \Gamma,
\]
where $u^*$ is the matrix satisfying $uu^*=\det u$.
\end{lem}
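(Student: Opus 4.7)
The plan is to start from the third equality in (\ref{eq:lattices}), namely
\[
\Hom_\Gamma(u,v) = v\QQ_p^\times \GL_2(\ZZ_p) u^{-1} \cap \Gamma,
\]
and to substitute $u^{-1} = p^{-a} u^*$, which follows from $uu^* = \det u = p^a$. The overall strategy is to use the reduced-norm-$1$ condition on $\Gamma$ to pin down the allowable scalars in $\QQ_p^\times$, and then to observe that on the resulting lattice one can enlarge $\GL_2(\ZZ_p)$ to all of $M_2(\ZZ_p)$ for free.

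First I would parametrize a generic element $\gamma = v\lambda g u^{-1}$ with $\lambda \in \QQ_p^\times$ and $g \in \GL_2(\ZZ_p)$, and compute $\det \gamma$. Since $\Gamma$ consists of elements of reduced norm $1$, we have $\det \gamma = 1$, which gives $\lambda^2 \det g = p^{a-b}$. Because $\det g \in \ZZ_p^\times$, taking $p$-adic valuations forces $\val_p(\lambda) = (a-b)/2 = m - b$. If $m$ is not an integer then $a-b$ is odd (since $a,b \in \ZZ$) and no such $\lambda$ exists in $\QQ_p^\times$, so $\Hom_\Gamma(u,v) = \emptyset$, which is the first half of the statement.

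Assume now $m \in \ZZ$. Writing $\lambda = p^{m-b}\mu$ with $\mu \in \ZZ_p^\times$ and absorbing $\mu$ into $g$, direct substitution gives $\gamma = p^{-m} v (\mu g) u^*$, so $\Hom_\Gamma(u,v) \subseteq p^{-m} v \GL_2(\ZZ_p) u^* \subseteq p^{-m} v M_2(\ZZ_p) u^* \cap \Gamma$. For the reverse inclusion, given $\gamma = p^{-m} v M u^* \in \Gamma$ with $M \in M_2(\ZZ_p)$, the determinant computation $\det \gamma = p^{-2m+a+b}\det M = \det M$ (using $2m = a+b$) and the reduced-norm-$1$ condition force $\det M = 1$. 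Hence $M \in \SL_2(\ZZ_p) \subseteq \GL_2(\ZZ_p)$, and reversing the above manipulation exhibits $\gamma$ in the form $v\lambda g u^{-1}$, establishing $\gamma \in \Hom_\Gamma(u,v)$.

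There is no real obstacle here beyond bookkeeping of $p$-adic valuations: the content of the lemma is that the reduced-norm-$1$ condition both constrains the $\QQ_p^\times$-scalar to the single power $p^{-m}$ (explaining the vanishing when $m \notin \ZZ$) and forces any $M \in M_2(\ZZ_p)$ in the second description to automatically be invertible, which is precisely what permits replacing $\GL_2(\ZZ_p)$ by the full matrix ring $M_2(\ZZ_p)$ in the final expression. This reformulation as an intersection with an explicit $\ZZ$-lattice is what will make the short-vector search mentioned in the surrounding text possible.
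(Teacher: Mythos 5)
Your proof is correct and follows essentially the same route as the paper: starting from the identity $\Hom_\Gamma(u,v) = \left(v\QQ_p^\times\GL_2(\ZZ_p)u^{-1}\right)\cap\Gamma$, using $\det\gamma=1$ to pin down $\val_p(\lambda)$ for the forward inclusion, and using $\det M=1$ to upgrade $M_2(\ZZ_p)$ to $\GL_2(\ZZ_p)$ for the reverse one. The only divergence is cosmetic: for the case $m\notin\ZZ$ the paper invokes the parity-of-distance result from Serre's book on trees, whereas you derive the emptiness directly from the same valuation constraint $2\val_p(\lambda)=a-b$, which is a perfectly valid and slightly more self-contained alternative.
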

\begin{proof}
Since $\Gamma \subseteq \SL_2(\Qp)$, the corollary to Proposition 1 of~\cite[Chapter 2, Subsection  1.2]{serretrees} shows that $m$ must be an integer for $u$ and $v$ to be equivalent under $\Gamma$, as the group $\Gamma$ preserves the parity of the distance between any two vertices. 

By Equation~\eqref{eq:lattices} it suffices to show that
\[
\left(v\Qp^\times\GL_2(\Zp)u^{-1} \right)\cap \Gamma = p^{-m}vM_2(\ZZ_p)u^*\cap \Gamma. 
\]
Write $z \in (v\QQ_p^\times\GL_2(\ZZ_p)u^{-1})\cap \Gamma$ as $z=$ $v \lambda g u^{-1}$ for $\lambda\in\QQ_p^\times$ and $g\in\GL_2(\ZZ_p)$. Observe that $z=$ $p^{-a}v\lambda g u^*$. Since $z \in \Gamma$ we have $\det z = 1$ and hence $\lambda^2 p^{b-a} \sigma =1$, where $\sigma=\det g\in\ZZ_p^\times$. Therefore $2\val_p(\lambda)=a-b$ and 
thus $\val_p(p^{-a}\lambda)=-m$. We conclude that 
$p^{-a}\lambda g$ belongs to $p^{-m} \GL_2(\ZZ_p)\subseteq p^{-m} M_2(\ZZ_p)$, and thus
\[
\left(v\Qp^\times\GL_2(\Zp)u^{-1} \right)\cap \Gamma \subseteq p^{-m}vM_2(\ZZ_p)u^*\cap \Gamma. 
\]

Conversely if one writes $z \in p^{-m}vM_2(\Zp)u^*\cap \Gamma$ as $z = p^{-m}vgu^*=$ $p^{a-m}vgu^{-1}$, for $g \in M_2(\Zp)$, then one checks that $\det g = 1$ and hence $g \in \GL_2(\Zp)$. 
\end{proof}

The group $\Gamma$ is defined via the splitting $\iota$, which can be described as a matrix with $p$-adic entries. Unfortunately this matrix can only be stored on a computer up to a finite precision. Hence, when using $p$-adic methods, the image of $\iota$ can not be computed exactly. This complicates the problem of determining if a given matrix in $M_2(\Qp)$ belongs to $\Gamma$. In order to deal with this difficulty we introduce the notion of an approximation to a $\Qp$-linear map.
\begin{dfn}
  Let $n \geq 0$ be an integer, let $V$ and $W$ be two finite dimensional $\Qp$-vector spaces, and let $\Lambda_V \subseteq V$ and $\Lambda_W\subseteq W$ be $\Zp$-lattices. Let $f \colon V \to W$ be a $\Qp$-linear map satisfying $f(\Lambda_V) \subseteq \Lambda_W$. Then an \emph{approximation} of $f$ to precision $n$ is a $\Qp$-linear map $g \colon V \to W$ such that $g \equiv f \pmod{p^n}$ when restricted to $\Lambda_V$.
\end{dfn}

\begin{lem}
\label{lem:vertexlemma}
Let $u$ and $v$ be matrices in $M_2(\ZZ)\cap \GL_2(\QQ_p)$ representing two vertices of $\cT$. Let $f \colon M_2(\Qp) \to B_p$ be an approximation of $\iota^{-1}$ to $p$-adic precision $2m$ relative to the orders $M_2(\Zp)$ and $R_p$. Define a $\ZZ$-lattice in $B$ as follows:
\[
  \Lambda(u,v) = f(vM_2(\ZZ_p)u^*)\cap R + p^{2m+1}R.
\]
Then $\Hom_\Gamma(u,v)$ is nonempty if and only if the shortest vectors in $\Lambda(u,v)$ have reduced norm $p^{2m}$.
\end{lem}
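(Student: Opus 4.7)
The plan is to translate the $\Gamma$-equivalence problem into a short-vector search in the rank-$4$ positive definite quadratic lattice $(R,\nrd)$. By the previous lemma, $\Hom_\Gamma(u,v)$ is nonempty exactly when $m\in\ZZ$ and $p^{-m}vM_2(\Zp)u^*\cap\Gamma\neq\emptyset$. Using the filtration $\Gamma=\bigcup_n\Gamma_n$, an element of that intersection corresponds bijectively to $x\in R$ with $\iota(x)\in vM_2(\Zp)u^*$ and $\nrd(x)=p^{2m}$; indeed, for $z=p^{-m}vgu^*\in\Gamma$ one has $\det g=1$ and $x=p^m\iota^{-1}(z)$ lies in $R$ with $\iota(x)=vgu^*\in M_2(\Zp)$ and $\nrd(x)=p^{2m}\det g=p^{2m}$. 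Call this target set $S$; the goal is to show $S\neq\emptyset$ if and only if the minimum of $\nrd$ on $\Lambda(u,v)\setminus\{0\}$ equals $p^{2m}$.

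The bridge between the computable approximation $f$ and the exact (but uncomputable) $\iota^{-1}$ is the identification $f(vM_2(\Zp)u^*)=\iota^{-1}(vM_2(\Zp)u^*)$ as $\Zp$-submodules of $B_p$. First I would record the inclusion $p^{2m}M_2(\Zp)\subseteq vM_2(\Zp)u^*$: for $M\in p^{2m}M_2(\Zp)$, the matrix $h=p^{-2m}v^*Mu$ lies in $M_2(\Zp)$ and satisfies $vhu^*=M$, using $vv^*=p^bI$ and $uu^*=p^aI$ with $a+b=2m$. Since $\iota\circ f-\id$ maps $M_2(\Zp)$ into $p^{2m}M_2(\Zp)$, the geometric series $\sum_{k\geq 0}(\id-\iota\circ f)^k$ converges $p$-adically and inverts $\iota\circ f$ on $M_2(\Zp)$; the above inclusion then shows that both $\iota\circ f$ and its inverse preserve the sublattice $vM_2(\Zp)u^*$, so $f$ restricts to a $\Zp$-module isomorphism between $vM_2(\Zp)u^*$ and $\iota^{-1}(vM_2(\Zp)u^*)$. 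In particular, $f(vM_2(\Zp)u^*)\cap R$ is precisely the set of $x\in R$ with $\iota(x)\in vM_2(\Zp)u^*$.

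With this identification in hand, the remainder is bookkeeping with the quadratic form. For the lower bound, decompose nonzero $x=x_1+x_2\in\Lambda(u,v)$ with $x_1\in f(vM_2(\Zp)u^*)\cap R$ and $x_2\in p^{2m+1}R$; writing $\iota(x_1)=vgu^*$ yields $\nrd(x_1)=p^{2m}\det g\in p^{2m}\ZZ$, and the polarization $\nrd(x_1+x_2)=\nrd(x_1)+\trd(x_1\bar{x_2})+\nrd(x_2)$, together with $\trd(x_1\bar{x_2})\in p^{2m+1}\ZZ$ and $\nrd(x_2)\in p^{4m+2}\ZZ$, gives $\nrd(x)\in p^{2m}\ZZ$, hence $\nrd(x)\geq p^{2m}$ by definiteness of $B$. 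For the equivalence, if $x\in\Lambda(u,v)$ has $\nrd(x)=p^{2m}$, then $\iota(x_1)\in vM_2(\Zp)u^*$ by the structural identification and $\iota(x_2)\in p^{2m+1}M_2(\Zp)\subseteq vM_2(\Zp)u^*$ by the inclusion, so $\iota(x)\in vM_2(\Zp)u^*$ and $x\in S$; conversely any $x\in S$ already lies in $f(vM_2(\Zp)u^*)\cap R\subseteq\Lambda(u,v)$. The main obstacle is verifying the identification $f(vM_2(\Zp)u^*)=\iota^{-1}(vM_2(\Zp)u^*)$: everything else follows by routine manipulation once this bridge between the $p$-adic approximation $f$ and the honest quaternionic structure of $R$ is established.
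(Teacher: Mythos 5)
Your proof is correct, and while it rests on the same basic mechanism as the paper's (norm-$p^{2m}$ vectors in $\Lambda(u,v)$ correspond to elements of $\Gamma$ carrying $u$ to $v$, with the approximation error absorbed because scaled integral matrices land back in $vM_2(\Zp)u^*$), you package it around a different key step. The paper argues element by element: given $g\in\Hom_\Gamma(u,v)$ it produces $\lambda'=\iota^{-1}(vxu^*)\in R[1/p]\cap R_p^{\max}=R$ and uses the congruence $f\equiv\iota^{-1}$ to place it in $\Lambda(u,v)$, and conversely it exhibits an explicit matrix $x+pv^*\iota(\alpha')u\in\GL_2(\Zp)$ certifying that a short vector moves $u$ to $v$. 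You instead prove once and for all the module identity $f\bigl(vM_2(\Zp)u^*\bigr)=\iota^{-1}\bigl(vM_2(\Zp)u^*\bigr)$, via the inclusion $p^{2m}M_2(\Zp)\subseteq vM_2(\Zp)u^*$ and a Neumann-series inversion of $\iota\circ f$, after which $f(vM_2(\Zp)u^*)\cap R=\{x\in R:\iota(x)\in vM_2(\Zp)u^*\}$ and both directions are immediate from the bijection with your set $S$; your polarization computation also makes precise the paper's terse opening assertion that all nonzero elements of $\Lambda(u,v)$ have reduced norm at least $p^{2m}$. A side benefit of your route is that it genuinely only uses the stated precision $2m$, whereas the paper's proof quietly invokes congruences modulo $p^{2m+1}$. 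Two shared (and shared with the paper, so not held against you) caveats: the Neumann series needs $m\geq 1$, the precision-$0$ case being degenerate in both arguments; and the identification of $S$ with $p^{-m}vM_2(\Zp)u^*\cap\Gamma$ presupposes $m\in\ZZ$, so the lemma is really to be applied after the parity check of the preceding lemma — exactly as the paper's own converse does when it writes $\iota(\lambda/p^m)\in\Gamma$. Finally, your assertion that $x=p^m\iota^{-1}(z)$ lies in $R$ implicitly uses $R[1/p]\cap R_p^{\max}=R$, which the paper spells out; it would be worth one sentence in your write-up.
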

\begin{proof}
  First note that, since $\iota$ transforms the reduced norm of $B_p$ into the determinant of $M_2(\Qp)$, elements in $\Lambda(u,v)$ have reduced norm of valuation at least $2m$. Let $g\in \Hom_\Gamma(u,v)$ and write $g=p^{n-m}v x u^*=\iota(y)$, where $x\in M_2(\ZZ_p)$ and $y\in p^n\Gamma_n$. The element $\lambda=p^{m-n}f(g)$ is a shortest vector in $\Lambda(u,v)$ of reduced norm $p^{2m}$: since it clearly has reduced norm $p^{2m}$ and elements of $\Lambda(u,v)$ have reduced norm at least $p^{2m}$, it suffices to show that $\lambda \in \Lambda(u,v)$.

Consider $\lambda'=\iota^{-1}(p^{m-n}g)$ and note that $\lambda'$ belongs to $R$: clearly $\lambda'=p^{m-n}y$ belongs to $R[1/p]$. But also $\lambda'=\iota^{-1}(vxu^*)$ is $p$-integral, since $vxu^*$ is so, and $\iota$ preserves $p$-integrality. Therefore $\lambda'\in R[1/p]\cap R^\text{max}_p=R$. Thus, by definition of $f$, we see that $\lambda'$ and $\lambda$ are congruent modulo $p^{2m+1}$ and we can write $\lambda=\lambda'+p^{2m+1}\alpha$ for $\alpha\in R$. This shows that $\lambda'$ belongs to $\Lambda(u,v)$, and therefore so does $\lambda$.

Conversely suppose that $\lambda\in \Lambda(u,v)$ is of reduced norm $p^{2m}$. Write $\lambda$ as $\lambda=f(v x u^*)+p^{2m+1}\alpha$ for $x\in \GL_2(\ZZ_p)$ and $\alpha\in R$. Again, by definition of $f$, we can rewrite this expression as $\lambda=\iota^{-1}(v x u^*)+p^{2m+1}\alpha'$ for $\alpha'\in R$. We claim that $\iota(\lambda)$ belongs to $\Hom_\Gamma(u,v)$. First, note that $\iota(\lambda/p^m)$ is indeed an element of $\Gamma$. It remains to show that $\iota(\lambda)$ takes the vertex corresponding to $u$ to that corresponding to $v$. For this we write
\[
\iota(\lambda)=vxu^*+p^{2m+1}\iota(\alpha')=v(x+p^{2m+1}v^{-1}\iota(\alpha')(\det u)^{-1}u)u^*,
\]
and note that the matrix
\[
x+p^{2m+1}v^{-1}\iota(\alpha')(\det u)^{-1}u=x+p (p^bv^{-1})\iota(\alpha')u
\]
belongs to $\GL_2(\ZZ_p)$ because $x$ does. Therefore it stabilizes the vertex $v_0$, and this concludes the proof.
\end{proof}

\begin{rmk}
If $u$ and $v$ are ordered edges of $\cT$, then the exact analogue of Lemma~\ref{lem:vertexlemma} is true with the $\ZZ$-lattice $\Lambda(u,v)$ replaced with
\[
\Lambda'(u,v)=f(v \Gamma_0(p\ZZ_p) u^*)\cap R +p^{2m+1}R.
\]
\end{rmk}

\begin{rmk}
Since the lattice $\Lambda(u,v)$ contains $p^{2m+1}R$, Lemma~\ref{lem:vertexlemma} allows $\Lambda(u,v)$ to be described on a computer using $p$-adic approximations for a basis of $R$ as long as the approximations have at least $2m$ digits of $p$-adic precision. One may then use standard techniques, like the LLL algorithm as explained in~\cite[Section~2.6]{cohen1993course}, to find the shortest vectors in $\Lambda(u,v)$. These algorithms have a complexity of $O(m^3)$. This yields an efficient algorithm for determining whether vertices or edges of $\cT$ are equivalent under $\Gamma$.
\end{rmk}

\subsection{Computing fundamental domains in the tree}
\label{ssec:fdomain}
In this subsection edge means \emph{ordered edge}. If $E$ is a set of edges of $\cT$ and if $v$ is a vertex, we denote by $E(v)$ the set edges of $E$ that originate at $v$.

Lemma~\ref{lem:vertexlemma} allows one to compute the quotient $\Gamma \backslash \cT$ in a straightforward way. Algorithm~\ref{alg:fdomain} which we describe below in fact computes a fundamental domain for the action of $\Gamma$ on $\cT$, and this data is richer than the data of the quotient graph. In analogy with the case of Riemann surfaces uniformized by $\uhp$, a \emph{fundamental domain} in $\cT$ for the action of $\Gamma$ consists of:
\begin{enumerate}
\item a connected subtree $\cD \subseteq \cT$ whose edges form a set of distinct coset representatives for the action of $\Gamma$ on the edges of $\cT$;
\item all nontrivial edge and vertex stabilizers (in the non-Schottky case);
\item a collection of triples $(u,v,\gamma)$ where $u$ and $v$ are distinct boundary vertices of $\cD$, and $\gamma \in \Gamma$ satisfies $\gamma u = v$. One triple is computed for each pair of identified boundary vertices. The data of all such triples is refered to as a \emph{boundary pairing}.
\end{enumerate}
\begin{rmk}
  Let $\cD \subseteq \cT$ denote a connected subtree whose edges form a complete set of distinct coset representatives for the action of $\Gamma \backslash E(\cT)$. If $u$ is a boundary vertex of $\cD$, then it must be $\Gamma$-equivalent to at least one other boundary vertex: if were not, then the edges of $\cT$ adjacent to $u$ and outside $\cD$ could not be $\Gamma$-equivalent to any edges in $\cD$, which contradicts the fact that the edges of $\cD$ contain a full set of coset representatives. One similarly uses the distinctness of the coset representatives in $\cD$ to show that $u$ can be so paired with a \emph{unique} other boundary vertex $v$.
\end{rmk}

\begin{algorithm}
\caption{Compute a fundamental domain for $\Gamma$ acting on $\cT$}
\label{alg:fdomain}
\begin{algorithmic}
\REQUIRE A prime $p$, an order $R \subseteq B$ as above, and a splitting $\iota_p\colon R_p\cong M_2(\ZZ_p)$.
\ENSURE A fundamental domain together with an edge pairing.
\STATE Select a vertex $v_0\in V(\cT)$ to begin.
\STATE Initialize a queue $W$ with $v_0$.
\STATE Initialize $E$ and $P$ as empty lists.
\WHILE{$W\neq \emptyset$}
 \STATE Pop $v$ from $W$.
 \FOR{$e\in E(\cT)(v)$}
  \IF{there is no $e'\in E(v)$ which is $\Gamma$-equivalent to $e$}
   \STATE Append $e$ to $E$.
   \IF{there is a vertex $v'\in W$ which is $\Gamma$-equivalent to $t(e)$}
   \STATE Append $(t(e),v',\gamma)$ to $P$, where $\gamma\in\Gamma$ satisfies $\gamma t(e)=v'$.
   \ELSE
    \STATE Push $t(e)$ onto $W$.
   \ENDIF
  \ENDIF
 \ENDFOR
\ENDWHILE
\RETURN $E$, $P$.
\end{algorithmic}
\end{algorithm}

\begin{rmk}
The computation of $\Gamma$-invariant harmonic cocycles on $\cT$, as in Section \ref{sec:meas-etc} below, can be facilitated by storing extra data during the execution of Algorithm~\ref{alg:fdomain}. More precisely, to compute the ``harmonicity relations'', one needs the data of how all of the edges leaving a given vertex get identified under $\Gamma$. Algorithm~\ref{alg:fdomain} computes this data when testing for membership in the fundamental domain.
\end{rmk}

One sees that Algorithm~\ref{alg:fdomain} terminates, as the quotient $\Gamma\backslash\cT$ is a connected and finite graph. We wish to analyze the complexity of this algorithm in terms of the genus $g$ of the corresponding Shimura curve. Let $X_0(pN^-,N^+)$ denote the Shimura curve associated to the Eichler order of level $N^+$ in the \emph{indefinite} quaternion algebra with discriminant $pN^-$. The following genus formula is due to Ogg.
\begin{thm}
\label{thm:genusformula}
The genus $g$ of $X_0(pN^-,N^+)$ satisfies
\[
g=1+\frac{pN^-N^+}{12}\prod_{\ell\mid pN^-}\left(1-\frac 1\ell\right)\prod_{\ell\mid N^+}\left(1+\frac 1 \ell\right)-\frac{e_3}{3}-\frac{e_4}{4},
\]
where the $e_k$ are defined as
\begin{align*}
e_k &=\prod_{\ell\mid pN^-}\left(1-\left(\frac{-k}{\ell}\right)\right)\prod_{\ell \| N^+}\left(1+\left(\frac{-k}{\ell}\right)\right)\prod_{\ell^2\mid N^+} \nu_\ell(k),\\
 \nu_\ell(k) &=\begin{cases}
2&\text{ if } \left(\frac{-k}{\ell}\right)=1,\\
0&\text{ else.}
\end{cases}
\end{align*}
Here $\left(\frac{\cdot}{\cdot}\right)$ stands for Kronecker's quadratic symbol.
\end{thm}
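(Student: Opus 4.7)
The plan is to derive the stated formula by combining the Gauss-Bonnet theorem for orbifolds with the Eichler mass formula and a classical count of optimal embeddings. Let $B'/\QQ$ be the indefinite quaternion algebra of discriminant $pN^-$, let $R' \subset B'$ be an Eichler $\ZZ$-order of level $N^+$, and let $\Gamma' \subset \mathrm{PSL}_2(\RR)$ be the image of the reduced-norm-one units of $R'$ under a splitting at the archimedean place, so that $X_0(pN^-,N^+) = \Gamma' \backslash \uhp$. Since $B'$ is a non-split quaternion algebra the quotient is compact, so there are no cusps and the only torsion of $\Gamma'$ has order $2$ or $3$. Gauss--Bonnet then gives
$$g \;=\; 1 + \frac{\Vol(\Gamma'\backslash\uhp)}{4\pi} - \frac{e'_2}{4} - \frac{e'_3}{3},$$
where $e'_n$ denotes the number of elliptic points of order $n$.

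First I would compute the volume. The Eichler mass formula for an Eichler order of level $N^+$ in the indefinite quaternion algebra of discriminant $pN^-$ (see e.g.\ Vign\'eras) gives
$$\frac{\Vol(\Gamma'\backslash\uhp)}{4\pi} \;=\; \frac{pN^- N^+}{12}\prod_{\ell \mid pN^-}\left(1-\frac{1}{\ell}\right)\prod_{\ell\mid N^+}\left(1+\frac{1}{\ell}\right),$$
which reproduces the main term of the theorem. Second I would count elliptic points. An elliptic point of order $n \in \{2,3\}$ in $\Gamma'$ corresponds to a conjugacy class of optimal embeddings into $R'$ of the ring of integers of the imaginary quadratic field whose unit group has order $2n$: this is $\ZZ[i]$ (discriminant $-4$, so $k=4$) for $n=2$ and $\ZZ[(1+\sqrt{-3})/2]$ (discriminant $-3$, so $k=3$) for $n=3$. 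Thus $e'_2 = e_4$ and $e'_3 = e_3$.

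The number of such optimal embeddings factors, by the local-global formula of Eichler, as a product of local embedding numbers. At a prime $\ell \mid pN^-$, where $B'$ or $R'$ is ramified, the local factor is $1 - \left(\frac{-k}{\ell}\right)$; at a prime $\ell \,\|\, N^+$ it is $1 + \left(\frac{-k}{\ell}\right)$; and at a prime $\ell$ with $\ell^2 \mid N^+$ it is $\nu_\ell(k)$ as defined in the statement (the last one reflecting the fact that the local Eichler order has a nontrivial normalizer-quotient acting on local embeddings). Substituting these into the local-global product yields precisely the formula given for $e_k$.

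The main bookkeeping obstacle is the computation of the local embedding numbers at primes dividing $N^+$ exactly once versus those dividing $N^+$ to higher power, where one must take care to quotient by the appropriate local units; once these local factors are correctly identified and combined with the Eichler mass formula and the identification $e'_2 = e_4$, $e'_3 = e_3$, substitution into the Gauss-Bonnet formula displayed above yields the theorem. The full derivation appears in Ogg's original treatment.
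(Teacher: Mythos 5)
The paper offers no proof of its own here---it simply cites Ogg---and your sketch (Gauss--Bonnet for the cocompact Fuchsian group, the Eichler mass formula for the volume term, and Eichler's local--global count of optimal embeddings of $\ZZ[i]$ and $\ZZ[\zeta_3]$ giving $e'_2=e_4$ and $e'_3=e_3$, with the local embedding numbers at primes dividing $N^+$ deferred to the literature) is exactly the standard argument carried out in that reference. Your proposal is therefore correct and follows essentially the same route as the paper's cited source.
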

\begin{proof}
See~\cite[pg. 280,301]{MR717598}.
\end{proof}

In the following result we assume that $\Gamma$ is Schottky, but see Remark~\ref{rmk:nonschottky} for how the result can be used in the non-Schottky case.
\begin{prop}
Suppose that $\Gamma$ is Schottky. Let $n=n(p,N^-,N^+)$ be the number of sets of the form $\Hom_\Gamma(u,v)$ that need to be computed in order to find a fundamental domain for $\Gamma \backslash \cT$. Then
\[
n\leq \frac{(g-1)(p^3-2p+2g-1)}{(p-1)^2}.
\]
\end{prop}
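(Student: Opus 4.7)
The plan is to bound $n$ by counting the two kinds of $\Hom_\Gamma$-computations done by Algorithm~\ref{alg:fdomain}: edge-equivalence tests (``is any $e' \in E(v)$ $\Gamma$-equivalent to $e$?'') and vertex-equivalence tests (``is any $v' \in W$ $\Gamma$-equivalent to $t(e)$?''). First I would establish the combinatorial setup. Since $\Gamma$ is Schottky, it acts freely on $\cT$, and the quotient $\Gamma\backslash\cT$ is a $(p+1)$-regular graph with $V$ vertices and $E$ unordered edges satisfying $V(p+1) = 2E$. Combined with the Euler relation $g = E - V + 1$ from Proposition~\ref{p:quotproperties}, this yields $V = 2(g-1)/(p-1)$. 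The popped vertices of the BFS correspond bijectively with these $V$ vertices, forming the interior vertices of the fundamental domain $\cD$.

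For the edge tests, when processing a popped vertex $v$, the algorithm sequentially examines the $p+1$ ordered edges out of $v$ and pairwise compares each against those already added to $E(v)$. Since $\Stab_\Gamma(v) = \{1\}$ in the Schottky case, distinct edges at $v$ lie in distinct $\Gamma$-orbits, so every such comparison fails. Thus exactly $\binom{p+1}{2} = p(p+1)/2$ tests occur at each popped vertex, contributing $V p(p+1)/2$ in total.

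For the vertex tests, I would argue that each unordered pair of distinct interior vertices contributes at most one $\Hom_\Gamma$-computation. When the $k$-th interior vertex $v_k$ is first encountered as some $t(e)$, the algorithm has tested it against the $k-1$ previously-discovered interior vertices in $W$, with all tests failing (distinct orbits). Pairings involving boundary-vertex targets, by suitably ordering the search through $W$ or caching negative results, can be resolved without introducing additional failed tests beyond those already charged to interior pairs. Summing gives at most $\sum_{k=1}^V (k-1) = \binom{V}{2} = V(V-1)/2$ vertex tests.

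Combining the two counts and using $V = 2(g-1)/(p-1)$ together with the algebraic identity $p(p+1) + V - 1 = (p^3 - 2p + 2g - 1)/(p-1)$, one obtains
\[
n \leq \frac{Vp(p+1)}{2} + \frac{V(V-1)}{2} = \frac{V(p(p+1) + V - 1)}{2} = \frac{(g-1)(p^3 - 2p + 2g - 1)}{(p-1)^2}.
\]
The main obstacle is the vertex-test count: one must carefully justify that pairings involving boundary-vertex targets can be resolved without exceeding the $\binom{V}{2}$ budget, which requires a bookkeeping argument about the BFS and the order in which elements of $W$ are traversed.
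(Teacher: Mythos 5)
Your proposal is correct and follows essentially the same route as the paper: the paper bounds the count by at most $pE$ edge comparisons (which equals your $Vp(p+1)/2$ via $2E=(p+1)V$) plus $V(V-1)/2$ vertex comparisons, then substitutes $V=2(g-1)/(p-1)$ coming from $(p+1)$-regularity and $g=1-V+E$. The boundary-vertex bookkeeping you flag as the main obstacle is not elaborated in the paper either, which simply asserts the $V(V-1)/2$ bound on vertex comparisons.
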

\begin{proof}
   Let $V$ (resp. $E$) denote the number of vertices (resp. edges) in a fundamental domain. The algorithm terminates after doing at most $pE$ comparisons among edges, and at most ${V(V-1)/2}$ comparisons among vertices. Therefore $n=pE+V(V-1)/2$. Since we are assuming that $\Gamma$ is Schottky, the quotient $\Gamma\backslash\cT$ is $(p+1)$-regular, and $E$ and $V$ are related by the formula $2E=(p+1)V$. Hence
\[
V=\frac{2(g-1)}{p-1},\quad E=\frac{(p+1)(g-1)}{p-1}.
\]
The lemma follows.
\end{proof}

\begin{rmk}
\label{rmk:nonschottky}
  Since $\Gamma$ is assumed to be Schottky, $\Gamma\backslash \cT$ is Ramanujan. This implies, according to~\cite[Prop~7.3.11]{MR2569682}, that the diameter of $\Gamma\backslash\cT$ is $O(\log  g/\log p)$. Therefore the running time of Algorithm~\ref{alg:fdomain} is
\[
O\left(\frac{(\log g)^3(g-1)(p^3-2p+2g-1)}{(\log p)^3 (p-1)^2}\right)=O\left(\frac{(\log g)^3g(p^3+2g)}{(\log p)^3p^2}\right).
\]
For a general $\Gamma$, Proposition~\ref{p:quotproperties} provides a finite index normal Schottky subgroup $\Gamma'\subset\Gamma$ and one can use the Schottky case to deduce that the running time for $\Gamma$ is
\[
O\left(\frac{(\log Mg)^3(Mg-1)(p^3-2p+2Mg-1)}{(\log p)^3 (p-1)^2}\right)=O\left(\frac{(\log Mg)^3Mg(p^3+2Mg)}{(\log p)^3p^2}\right),
\]
where $M$ is a positive integer depending only on $pN^-$, as in Proposition~\ref{p:quotproperties}.
\end{rmk}

Algorithm \ref{alg:fdomain} can be improved in the case of a Schottky group. If a vertex $v$ has a trivial stabilizer in $\Gamma$, then all the edges adjacent to $v$ are necessarily inequivalent under $\Gamma$, and so one need not test for membership in a fundamental domain: if one accepts the vertex in the domain, then one must also add all of the adjacent edges to the domain. This allows one to avoid making \emph{any} edge comparisons in Algorithm~\ref{alg:fdomain} in the Schottky case. The running time then improves to
\[
O\left(\frac{(\log g)^3g^2}{(\log p)^3p^2}\right).
\]
In terms of the quotient graph, this improvement takes into account the fact that if $\Gamma$ is Schottky, then $\Gamma\backslash \cT$ is a $(p+1)$-regular connected multigraph without loops.

\subsection{Examples}
\label{sec:examples}
First take $p = 2$, $N^- = 13$ and $N^+=1$. The output of Algorithm \ref{alg:fdomain} is pictured in Figure~\ref{graph1_fundom} and the corresponding quotient graph is pictured in Figure~\ref{graph1}. This corresponds to the special fiber at $2$ of the Shimura curve  $X_0(26,1)$. The genus of this curve is $2$.
\begin{figure}[h]
  \centering
\includegraphics[width=1in]{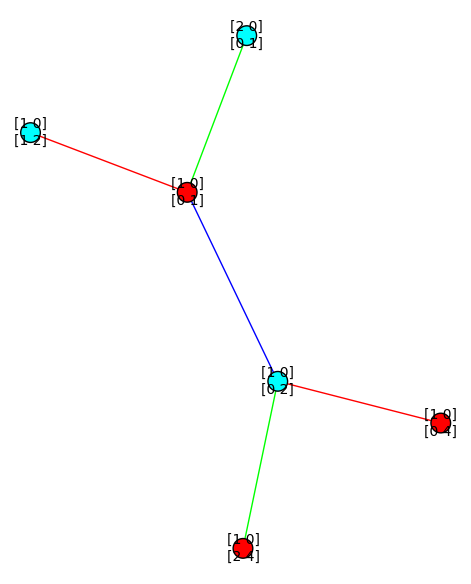}
  \caption{Fundamental domain for $\Gamma(2,13,1)$}
  \label{graph1_fundom}
\end{figure}

\begin{figure}[h]
  \centering
\includegraphics[width=1in]{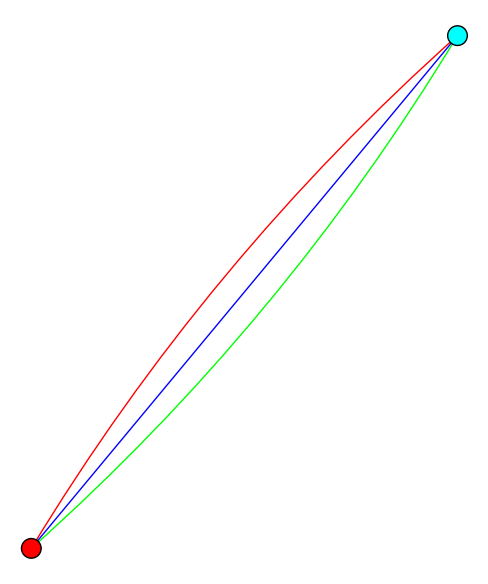}
  \caption{$\Gamma(2,13,1)\backslash \cT$}
  \label{graph1}
\end{figure}
In our next example we increase the level $N^+$ and consider the quotient map
\[\Gamma(2,13,3)\backslash \cT \to \Gamma(2,13,1)\backslash\cT.\]
This corresponds to the covering map $X_0(26,3) \to X_0(26,1)$. The colors in Figures \ref{graph2_fundom} and \ref{graph2} encode the identifications made by the quotient map.
\begin{figure}[h]
  \centering
\includegraphics[width=2in]{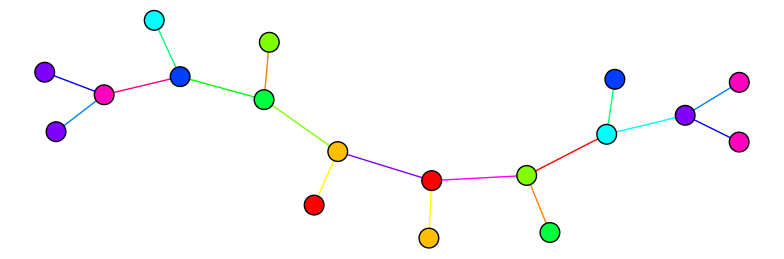}
  \caption{Fundamental domain for $\Gamma(2,13,3)$}
  \label{graph2_fundom}
\end{figure}

\begin{figure}[h]
  \centering
\includegraphics[width=2in]{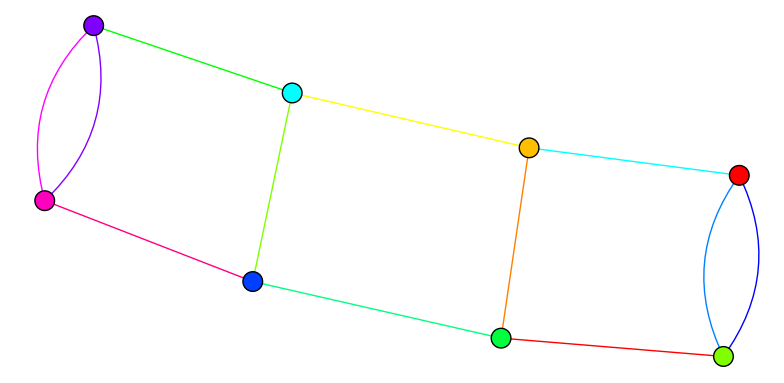}
  \caption{$\Gamma(2,13,3)\backslash\cT$}
  \label{graph2}
\end{figure}

Finally, by way of comparison, the quotient $\Gamma(211,1511,1)\backslash\cT$ is a graph with $254$ vertices and $26678$ edges and corresponds to a curve of genus $26425$. It took less than $3$ hours to compute on the same \emph{Intel Core i5}.

\section{Cocycles, distributions and modular forms}
\label{sec:meas-etc}
In this section we recall various definitions and results concerning rigid analytic modular forms on uniformized Shimura curves. Throughout this section $V$ denotes a $\Qp$-vector space endowed with a left action of $\GL_2(\Qp)$.

\subsection{Functions on the Bruhat-Tits tree}
\label{ssec:edgefuncs}
Let $C(\cT,V)$ denote the collection of $V$-valued functions on the ordered edges of the Bruhat-Tits tree. This space inherits a left action of $\GL_2(\Qp)$ defined for $c \in C(\cT,V)$ by the rule
\[
  (g\cdot c)(e) = g\cdot (c(g^{-1}\cdot e))
\]
for all $g \in \GL_2(\Qp)$ and all ordered edges $e$ of $\cT$. Recall that if $e=(v_1,v_2)$ is an ordered edge of $\cT$, we write $o(e)=v_1$ for the origin of $e$ and $t(e)=v_2$ for its terminus. 

Recall that a $V$-valued \demph{harmonic cocycle} on $\cT$ is a function $c \in C(\cT,V)$ such that $c(\ebar) = -c(e)$ for all edges $e$, and such that
\[
  \sum_{o(e) = v} c(e) = 0
\]
for all vertices $v$ of $\cT$. The space of harmonic cocycles is denoted by $C_h(\cT,V)$. Note that $\GL_2(\QQ_p)$ acts on $C_h(\cT,V)$. Given $\Gamma \subseteq \GL_2(\Qp)$ a subgroup, denote with
  $C(\Gamma,V) = C(\cT,V)^\Gamma$ and $C_h(\Gamma, V) = C_h(\cT,V)^\Gamma$  the invariant subspaces.

\subsection{Boundary distributions}
\label{ssec:meas}
\label{ssec:integrals}

In order to identify the computable space of harmonic cocycles with an arithmetically interesting space of modular forms, it is useful to introduce an intermediate space of distributions. 
\begin{dfn}
  A $V$-valued \demph{boundary distribution} on $\uhp_p$ is a $V$-valued distribution on $\PP^1(\Qp)$, that is, a function
\[
  \mu \colon \left\{\text{compact open sets in } \PP^1(\Qp)\right\} \to V
\]
that is finitely additive on disjoint unions.
\end{dfn}

We write $\Dist(V)$ for the $\Qp$-space of $V$-valued boundary distributions, with the left action of $\GL_2(\Qp)$ given by: $ (\gamma \cdot \mu)(U) = \gamma \cdot(\mu(\gamma^{-1}\cdot U))$; here $\gamma^{-1}\cdot U$ denotes the compact open obtained from $U$ by fractional linear transformation.

Let $\Dist_0(V)$ denote the subrepresentation of $\Dist(V)$ consisting of distributions $\mu$ satisfying $\mu(\PP^1(\Qp)) = 0$.


A compact open subset of $\PP^1(\Qp)$ can be written as a finite disjoint union of compact open balls. Hence, to specify a boundary distribution, it suffices to describe its values on compact open balls. The compact open balls of $\PP^1(\Qp)$ are in bijective correspondence with the ordered edges $e$ of the Bruhat-Tits tree. An \emph{end} of $\cT$ is a non-backtracking sequence $\{e_1,e_2,e_3,\ldots\}$ of edges in $E(\cT)$ satisfying $t(e_i)=o(e_{i+1})$, and two such ends are equivalent if the corresponding sequences differ only by a shift and a finite number of initial terms. The set of ends is naturally identified with $\PP^1(\Qp)$, and one can attach to an edge $e$  the open subset $U_e$ of $\PP^1(\Qp)$ consisting of all of the ends that pass through $e$. For more details the reader is invited to consult~\cite{DT08}.

 Note that for every ordered edge $e$ of the tree, and for every vertex $v$, one has:
\begin{align}
\label{eq:compactopens}
U_e \coprod U_{\ebar} &= \PP^1(\Qp),&\coprod_{o(e) = v} U_e &= \PP^1(\Qp).
\end{align}

This allows to associate to any given a boundary distribution $\mu$ satisfying $\mu(\PP^1(\Qp)=0$ a $V$-valued harmonic cocycle $c_\mu$, by the rule $c_\mu(e) = \mu(U_e)$. This induces a $\GL_2(\Qp)$-equivariant isomorphism
\[
  \Dist_0(V) \cong C_h(\cT,V),\quad \mu\mapsto c_\mu.
\]

If $\Gamma \subseteq \GL_2(\Qp)$ is a subgroup, then we let $\Dist_0(\Gamma,V)$ denote the $\Gamma$-invariant distributions in $\Dist_0(V)$. Since the isomorphism $\mu \mapsto c_\mu$ above is $\GL_2(\Qp)$-equivariant, it yields an isomorphism $\Dist_0(\Gamma,V) \cong C_h(\Gamma,V)$.

Locally constant functions with compact support on $\PP^1(\Qp)$ can be integrated against boundary distributions $\mu$ on $\uhp_p$. If $\mu$ is furthermore $\Gamma$-invariant for some cocompact subgroup $\Gamma$ of $\GL_2(\QQ_p)$, then $\mu$ can frequently be integrated against a wider class of functions. For example, if $\mu$ is $\Qp$-valued, then the cocompactness of $\Gamma$ implies that $\mu$ is bounded. Such distributions are called \emph{measures}, and they can be used to integrate any continuous function on $\PP^1(\Qp)$. We begin our discussion of integration by introducing the coefficient modules used to describe rigid analytic modular forms of arbitrary even weight.

For each integer $n \geq 0$ let $P_n \subseteq \Qp[X]$ denote the vector subspace of polynomials of degree at most $n$. We regard this as a right $\GL_2(\Qp)$-module via the action
\[
  P(X)\cdot \gamma = \det(\gamma)^{-n}(cX+d)^nP\left(\frac{aX+b}{cX+d}\right),\quad\quad \gamma = \mtx abcd \in \GL_2(\Qp).
\]
Let $V_n$ denote the linear dual of $P_n$. It inherits a left action of $\GL_2(\Qp)$ via the formula $(\gamma\cdot \omega)(P(X)) = \omega(P(X)\cdot \gamma)$.

Let $A_n$ denote the set of $\Qp$-valued functions on $\PP^1(\Qp)$ which are locally analytic except possibly for a pole at $\infty$ of order at most $n$. Functions in $A_n$ can be integrated against distributions $\mu \in \Dist_0(\Gamma, V_n)$. For a more precise statement of the following  theorem, and for a proof, consult \cite[Proposition 9]{padicpoissonkernel}, or the exposition in~\cite[Chapter~2, Theorem~2.3.2]{baker2008p}.
\begin{thm}[Amice-Velu, Vishik, Teitelbaum]
\label{thm:amicevelu}
There is a unique way to continuously extend a distribution $\mu \in \Dist_0(\Gamma, V_n)$ to a $\Gamma$-invariant distribution (written $\mu$ also) on $A_n$, satisfying furthermore:
\[
\int_{U_{a,n}}\sum_{m=0}^\infty c_m (x-a)^m d\mu = \sum_{m=0}^\infty c_m \int_{U_{a,n}} (x-a)^md\mu,\quad U_{a,n}=a+p^n\ZZ_p.
\]
\end{thm}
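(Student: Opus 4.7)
The theorem has two halves, uniqueness and existence, and uniqueness is the easier of the two. Any continuous extension of $\mu$ is determined by the displayed formula on locally polynomial functions, and such functions are dense in $A_n$ with respect to its natural topology. Hence two continuous extensions satisfying the formula must coincide.

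The real work lies in existence. The main obstacle is that $\mu(U) \in V_n$ is a priori only a linear functional on polynomials of degree at most $n$, whereas a Taylor expansion $f(x) = \sum_m c_m(x-a)^m$ on a small ball $U_{a,k}$ involves monomials of arbitrarily high degree. To make sense of $\int_{U_{a,k}}(x-a)^m d\mu$ for $m > n$, the plan is to define it by a limit over refinements: partition $U_{a,k} = \coprod_i U_{a_i,k'}$ with $k' > k$, expand $(x-a)^m = \sum_{j}\binom{m}{j}(a_i-a)^{m-j}(x-a_i)^j$ on each piece, truncate the inner sum at $j = n$, apply $\mu(U_{a_i,k'})$ termwise, and sum over $i$. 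One then shows these approximations form a Cauchy sequence in $k'$ and that the limit is independent of the refinement.

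The crux, and the main obstacle, is a uniform admissibility estimate of Amice-Velu type: a constant $C$ such that
\[
|\mu(U_{a,k})((x-a)^m)|_p \leq C \cdot p^{-km}
\]
for all $0 \leq m \leq n$ and all balls $U_{a,k}$. I would derive this from the $\Gamma$-invariance of $\mu$ together with the finiteness of $\Gamma\backslash \cT$ from Proposition~\ref{p:quotproperties}. Since there are only finitely many $\Gamma$-orbits of edges, the values $\mu(U_e)$ lie in a finite union of $\Gamma$-orbits inside the finite-dimensional space $V_n$; tracking how the right $\GL_2(\Qp)$-action on $P_n$ transforms the sup-norm $\|(x-a)^m\|_{U_{a,k}} = p^{-km}$ as one translates an arbitrary ball to one representing an orbit representative yields the uniform bound.

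Once admissibility is in hand, the Cauchy property of the truncated approximations and the independence of the limit from the choice of expansion center and radius follow by direct $p$-adic estimation using the bound: the error incurred by truncating $(x-a)^m$ at degree $n$ on $U_{a_i,k'}$ has sup norm bounded by $p^{-k(m-n-1)-k'(n+1)}$, which tends to zero uniformly as $k' \to \infty$. The resulting linear functional on $A_n$ is continuous by construction and inherits $\Gamma$-invariance from that of $\mu$; the displayed termwise integration formula then holds by design.
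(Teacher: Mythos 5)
Your overall architecture is the standard one: the paper does not prove this theorem itself but cites Teitelbaum's Poisson kernel paper (and Baker's exposition), and the proof there has exactly the shape you describe — reduce to the Amice--V\'elu--Vishik criterion by establishing a uniform growth estimate on moments, the estimate coming from $\Gamma$-invariance and the finiteness of $\Gamma\backslash\cT$. However, your crux estimate is wrong as stated, and this is a genuine gap. Writing $U_{a,k}=a+p^k\Zp$, the bound $\abs{\mu(U_{a,k})((x-a)^m)}\le C\,p^{-km}$ would say the moments are controlled by the sup norm of $(x-a)^m$ on the ball, i.e.\ that $\mu$ is essentially a measure (admissible of order $0$); this fails for every $n>0$, already at $m=0$: the total masses $\mu(U_{a,k})(1)$ grow like $p^{kn/2}$ (the slope-$n/2$ phenomenon reflected in $\Up$ acting by $p^{n/2}$ in Theorem~\ref{thm:uniquelift}). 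The point missed when ``tracking the action'' is this: an element $\gamma=\smtx{\alpha}{\beta}{c}{d}\in\Gamma$ carrying a fixed representative ball to $U_{a,k}$ has entries with denominators of size about $p^{k/2}$, so $\abs{cx+d}\approx p^{k/2}$ on the representative ball, and the weight factor $(cX+d)^n$ in the action on $P_n$ contributes $p^{kn/2}$ (the determinant factor is trivial since $\gamma$ has reduced norm $1$). Carrying the computation through gives $\abs{\mu(U_{a,k})((x-a)^m)}\le C\,p^{-k(m-n/2)}$, i.e.\ admissibility of order $n/2$, not $0$. Since $n/2<n+1$, this weaker bound still suffices: your refinement--truncation scheme converges, but with error decaying like $p^{-k'(n+1-n/2)}$ rather than the $p^{-k(m-n-1)-k'(n+1)}$ you quote, so the estimate and the ensuing error analysis must be corrected, not just the constant.

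Your uniqueness paragraph is also too quick. The functions on which $\mu$ is actually given are the locally polynomial functions of degree at most $n$, and these are \emph{not} dense in $A_n$: for instance $f\mapsto f^{(n+1)}(0)$ is a nonzero continuous functional vanishing on all of them. The higher moments $\int_{U_{a,k}}(x-a)^m\,d\mu$ for $m>n$ are part of what is being constructed, so ``continuous extensions agree on a dense subspace'' does not apply as stated. What rescues uniqueness is again the growth estimate: for any continuous $\Gamma$-invariant extension, transporting a deep ball to one of the finitely many representative balls bounds \emph{all} its moments by $C\,p^{-k(j-n/2)}$, and within this admissible class the higher moments are forced by the degree-$\le n$ ones through the same limiting process (this is the actual content of the Amice--V\'elu uniqueness). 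So both halves of your argument ultimately rest on the corrected estimate with the weight factor $p^{kn/2}$, and as written the key inequality is false for $n>0$.
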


\subsection{Rigid analytic modular forms}
\label{ssec:ramfs}
Let $\Gamma \subseteq \SL_2(\Qp)$ denote a discrete and cocompact subgroup, so that $\Gamma \backslash \uhp_p$ is a rigid analytic curve. Assume that $\Gamma$ acts freely on $\cH_p$, which can always be achieved by passing to a normal subgroup of finite index. In this section $n\geq 0$ denotes an \emph{even} integer.

A \demph{rigid analytic modular form} for $\Gamma$ of weight $n+2$ is a rigid analytic function $f \colon \uhp_p \to \Cp$ such that
\[
  f\left(\frac{a\tau + b}{c\tau + d}\right) = (c\tau+d)^{n+2}f(\tau) \quad \text{ for all } \twomat abcd \in \Gamma.
\]
The $\Cp$-vector space of rigid analytic modular forms of weigh $n+2$ for $\Gamma$ is denoted by $S_{n+2}(\Gamma)$.

Suppose that $\mu \in \Dist_0(\Gamma, V_n)$. Then Theorem~\ref{thm:amicevelu} implies that we may define a function $f_\mu$ on $\uhp_p$ by setting
\[
  f_\mu(\tau) = \int_{\PP^1(\Qp)} \frac{d\mu(z)}{(\tau - z)}.
\]
This is in fact a rigid modular form for $\Gamma$ of weight $n+2$. The following is~\cite[Theorem 3]{padicpoissonkernel}.
\begin{thm}[Manin, Schneider, Teitelbaum]
  For each even integer $n\geq 0$, the association $\mu \mapsto f_\mu$ described above defines an isomorphism
\[
  \Dist_0(\Gamma, V_{n})\otimes_{\Qp} \Cp \cong S_{n+2}(\Gamma).
\]
The inverse is defined by $f \mapsto \mu_f$ where
\[
  \mu_f(U_e)(P(X)) = \Res_e(P(\tau)f(\tau)d\tau).
\]
Here $U_e$ denotes the compact open ball in $\PP^1(\Qp)$ consisting of ends containing the ordered edge $e$, and $\Res_e$ denotes the residue on the oriented annulus in $\uhp_p$ corresponding to the ordered edge $e$.
\end{thm}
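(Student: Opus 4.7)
The plan is to verify: (i) $f_\mu$ is a rigid analytic modular form of weight $n+2$; (ii) $\mu_f$ is a well-defined element of $\Dist_0(\Gamma,V_n)$; and (iii) the two assignments are mutually inverse after extending scalars to $\Cp$. Since Subsection~\ref{ssec:meas} already provides the identification $\Dist_0(\Gamma,V_n)\cong C_h(\Gamma,V_n)$, no further work is needed on the combinatorial side; the substance of the proof is a $p$-adic residue calculus relating the Poisson integral to the rigid analytic $1$-form $f(\tau)\,d\tau$.

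For (i), the function $z\mapsto 1/(\tau-z)$ on $\PP^1(\Qp)$ is locally analytic and bounded near infinity, hence lies in $A_n$, and Theorem~\ref{thm:amicevelu} renders $f_\mu(\tau)$ meaningful. Rigid analyticity in $\tau$ follows by substituting the expansion $1/(\tau-z)=\sum_{k\geq 0}(\tau-\tau_0)^k(\tau_0-z)^{-k-1}$, valid on a small affinoid neighborhood of $\tau_0\in\puhp$, and integrating termwise using continuity of the extended distribution. To verify modularity, I would use the identity $\gamma\tau-\gamma z=(\tau-z)/((c\tau+d)(cz+d))$ for $\gamma=\smtx abcd \in\SL_2(\QQ_p)$ together with the invariance of $\mu$ written as $\int\phi(z)\,d\mu(z)=\int(cz+d)^n\phi(\gamma z)\,d\mu(z)$. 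Applied to $\phi(z)=1/(\gamma\tau-z)$, these produce
\[
f_\mu(\gamma\tau)=(c\tau+d)\int\frac{(cz+d)^{n+1}\,d\mu(z)}{\tau-z}.
\]
Writing $(cz+d)^{n+1}=(c\tau+d)^{n+1}+(z-\tau)Q_\tau(z)$ with $Q_\tau$ a polynomial in $z$ of degree $n$, and invoking $\mu(\PP^1(\Qp))=0$ to kill $\int Q_\tau(z)\,d\mu(z)$, yields exactly the factor $(c\tau+d)^{n+2}$.

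For (ii), finite additivity of $\mu_f$ and the identity $\mu_f(\PP^1(\Qp))=0$ are both consequences of the rigid analytic residue theorem applied to the $1$-form $P(\tau)f(\tau)\,d\tau$ on the affinoid corresponding to a finite subtree of $\cT$, whose boundary edges partition the tree's ends. The $\Gamma$-invariance of $\mu_f$ is immediate from the modular transformation of $f$ combined with the naturality of residues under M\"obius change of variable. For (iii), mutual inversion is the crux. One direction is a Fubini-type interchange:
\[
\mu_{f_\mu}(U_e)(P)=\Res_e\left(P(\tau)\int\frac{d\mu(z)}{\tau-z}\,d\tau\right)=\int\Res_e\left(\frac{P(\tau)\,d\tau}{\tau-z}\right)d\mu(z)=\int_{U_e}P(z)\,d\mu(z),
\]
where the inner residue evaluates to $P(z)\cdot\mathbf{1}_{U_e}(z)$ by the Cauchy residue formula on the annulus. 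The reverse composition $f\mapsto\mu_f\mapsto f_{\mu_f}$ is shown by partitioning $\PP^1(\Qp)$ into balls $U_e$ corresponding to edges at large distance $N$ from the privileged vertex, approximating $1/(\tau-z)$ by its truncated Taylor expansion in $z$ on each ball, and applying the residue theorem on a tubular neighborhood of $\tau$ within the affinoid associated to this subtree. The main obstacle is justifying these interchanges of residue, integral, and limit uniformly in $\tau$; this is where the non-archimedean functional analysis underlying Theorem~\ref{thm:amicevelu} enters essentially, and is the most technically delicate point of the proof.
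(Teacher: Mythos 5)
The paper does not prove this statement at all: it is quoted directly from Teitelbaum, cited as~\cite[Theorem 3]{padicpoissonkernel}, so there is no internal argument to measure you against. Your outline is, in substance, the standard residue/Poisson-kernel proof from that literature. The parts you actually carry out are correct: the identity $\gamma\tau-\gamma z=(\tau-z)/((c\tau+d)(cz+d))$, the invariance relation for a $V_n$-valued $\Gamma$-invariant distribution, and the decomposition $(cz+d)^{n+1}=(c\tau+d)^{n+1}+(z-\tau)Q_\tau(z)$ with $\deg Q_\tau=n$, killed by $\mu(\PP^1(\Qp))=0$, is exactly the mechanism that produces the weight $n+2$ and shows why the hypothesis $\mu\in\Dist_0$ and the weight-$n$ coefficients are needed. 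Likewise $\Res_e\bigl(P(\tau)\,d\tau/(\tau-z)\bigr)=P(z)\mathbf{1}_{U_e}(z)$ is the right computation for $\mu_{f_\mu}=\mu$. (Minor slips: the geometric expansion of $1/(\tau-z)$ is missing the alternating sign, the kernel is $\Cp$-valued so one must work after the base change $\otimes_{\Qp}\Cp$, and the invariance is of course only available for $\gamma\in\Gamma$, not all of $\SL_2(\Qp)$.)

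The substantive thin spot is the direction $f_{\mu_f}=f$ together with the analytic justifications you defer to ``the functional analysis underlying Theorem~\ref{thm:amicevelu}.'' To even apply that theorem to $\mu_f$, and to legitimize your interchanges of residue, integral and limit, you need growth bounds on the moments $\Res_e\bigl(\tau^i f(\tau)\,d\tau\bigr)$ as the edge $e$ recedes from the base vertex; these come from the $\Gamma$-invariance of $\mu_f$ together with the finiteness of $\Gamma\backslash E(\cT)$ (cocompactness), and the same bounds are what make the boundary error in your exhaustion-by-affinoids argument tend to zero. As written, those steps are assertions rather than proofs, and they are precisely the content supplied by Schneider's and Teitelbaum's work that the paper is leaning on when it cites~\cite{padicpoissonkernel}. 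So: right approach, correct key computations, but the convergence estimates for the inversion formula would have to be made explicit before this counts as a complete proof.
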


\subsection{Computing the space of modular forms}
\label{sec:computeforms}
In this subsection we take $\Gamma \subseteq \GL_2(\Qp)$ to be a quaternionic group as defined in Subsection \ref{ssec:quataction}. Thus, $\Gamma$ acts discretely and cocompactly on $\cT$, and the stabilizer in $\Gamma$ of any edge or vertex of $\cT$ is finite. See for example~\cite[Chapitre V,\S 3]{vigneras} for more details.

The space of rigid analytic modular forms for $\Gamma$ can be computed in practice by computing the space $C_h(\Gamma, V_{n})$ and by using the identifications
\[
  C_h(\Gamma,V_{n}) \cong \Dist_0(\Gamma, V_{n}) \cong S_{n+2}(\Gamma).
\]
The computation of $C_h(\Gamma, V_{n})$ amounts to an exercise in linear algebra over $\Qp$. One challenge in this computation is to ensure that one has enough $p$-adic precision when storing elements of $V_{n}$ in order to compute the space of harmonic cocycles to the desired precision. One way to guarantee correct output is to use a dimension formula for these spaces.
\begin{thm}
 For each $k\geq 2$ and $N\geq 1$ denote by $\delta_k(N)$ the dimension of the space of (classical) cusp forms of weight $k$ and level $\Gamma_0(N)$. Set $d_k(1,N^+)=\delta_k(N^+)$ and define recursively $d_k(L,N^+)$ as
\[
d_k(L,N^+)=\delta_k(LN^+)-\displaystyle\sum_{\substack{m\mid L\\m\neq L}} \sigma(L/m) \, d_k(m,N^+),\quad L>1.
\]
where $\sigma(x)$ gives the number of divisors of the integer $x$. Then the dimension of the space $C_h(\Gamma,V_n)$, where $\Gamma=\Gamma(p,N^-,N^+)$ is as defined in Subsection~\ref{ssec:quataction} is $d_{n+2}(pN^-,N^+)$.
\end{thm}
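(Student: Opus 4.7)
The plan is to identify $C_h(\Gamma,V_n)$ with a classical space of cusp forms via the $p$-adic uniformization of Shimura curves plus Jacquet--Langlands, and then to recognize the recursion defining $d_{n+2}(pN^-,N^+)$ as the standard inclusion--exclusion extracting a new subspace.

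First, I would reduce to a statement about rigid analytic modular forms. By the chain of $\Gamma$-equivariant isomorphisms already established in Section~\ref{ssec:meas} (the bijection $\mu \mapsto c_\mu$ between boundary distributions of total mass zero and harmonic cocycles) and Section~\ref{ssec:ramfs} (the Manin--Schneider--Teitelbaum isomorphism $\Dist_0(\Gamma,V_n)\otimes_{\Qp}\Cp \cong S_{n+2}(\Gamma)$), we have
\[
\dim_{\Qp} C_h(\Gamma,V_n) = \dim_{\Cp} S_{n+2}(\Gamma).
\]
Thus it suffices to show that $\dim S_{n+2}(\Gamma) = d_{n+2}(pN^-,N^+)$.

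Next I would invoke the Cerednik--Drinfeld theorem, which asserts that the rigid analytic curve $\Gamma\backslash\uhp_p$ is the analytification of $X_0(pN^-,N^+)_{\Qp}$, so $S_{n+2}(\Gamma)$ is the space of weight $n+2$ cusp forms on this Shimura curve. By the Jacquet--Langlands correspondence, these cusp forms are in bijection (preserving Hecke eigensystems, in particular dimensions) with the subspace $S_{n+2}^{pN^-\text{-new}}(\Gamma_0(pN^-N^+))$ of classical cusp forms that are new at every prime dividing the discriminant $pN^-$ of the indefinite quaternion algebra. Hence the claim becomes $\dim S_{n+2}^{pN^-\text{-new}}(\Gamma_0(pN^-N^+)) = d_{n+2}(pN^-,N^+)$.

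Finally, I would establish the combinatorial identity. For a squarefree integer $L$ coprime to $N^+$, the standard theory of oldforms and the two degeneracy maps at each prime $\ell \mid L$ gives the decomposition
\[
S_k(\Gamma_0(LN^+)) = \bigoplus_{m \mid L} \bigl(S_k^{m\text{-new}}(\Gamma_0(mN^+))\bigr)^{\oplus \sigma(L/m)},
\]
since for each divisor $m$ of $L$ and each prime $\ell \mid L/m$ there are $2$ independent degeneracy embeddings (and for squarefree $L/m$ one has $\sigma(L/m)=2^{\omega(L/m)}$, accounting for all combinations). Taking dimensions and applying $L = pN^-$, this reads
\[
\delta_k(LN^+) = \sum_{m \mid L} \sigma(L/m)\, \dim S_k^{m\text{-new}}(\Gamma_0(mN^+)),
\]
which is exactly the recursion defining $d_k(L,N^+)$ after solving for the $m=L$ term. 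By induction on the number of prime divisors of $L$ one concludes $d_k(L,N^+) = \dim S_k^{L\text{-new}}(\Gamma_0(LN^+))$, and specializing $(k,L) = (n+2,pN^-)$ finishes the proof.

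The main obstacle is not any one of these steps in isolation but verifying the multiplicity $\sigma(L/m)$ in the oldform decomposition: one must check that, because $L$ is squarefree and coprime to $N^+$, the images of the degeneracy maps from the various $m\text{-new}$ subspaces at intermediate levels are linearly independent in $S_k(\Gamma_0(LN^+))$ and exhaust the space. This is a classical fact (Atkin--Lehner--Li theory), but it is the arithmetic content that makes the combinatorial identity valid; everything else is bookkeeping.
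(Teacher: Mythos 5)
Your proposal is correct and follows essentially the same route as the paper: the paper's proof simply notes that Jacquet--Langlands (via the identification $C_h(\Gamma,V_n)\cong S_{n+2}(\Gamma)$ already set up in Section~\ref{sec:meas-etc}) gives $\dim C_h(\Gamma,V_n)=\dim S_{n+2}(\Gamma_0(pN^-N^+))^{pN^-\text{-new}}$ and leaves the recursion as an ``easy exercise.'' Your spelling out of the Cerednik--Drinfeld identification and the Atkin--Lehner--Li oldform decomposition with multiplicities $\sigma(L/m)$ is exactly that exercise, carried out correctly.
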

\begin{proof}
  An easy exercise after noting that the Jacquet-Langlands correspondence implies
\[
\dim_\QQ C_h(\Gamma,V_n) = \dim_\QQ S_{n+2}\left(\Gamma_0(pN^-N^+)\right)^{pN^-\text{-new}}.
\]
Note also that for $k=2$ this gives a recursive formula for the genus $g$ of the quotient graph $\Gamma\backslash\cT$.
\end{proof}

\begin{rmk}
An alternative way to compute the space of harmonic cocycles that avoids issues of $p$-adic precision involves fixing a splitting of the quaternion algebra $B$ by a suitably chosen quadratic field $F$ that can be embedded in $\QQ_p$. One can then work with the analogue of $V_{n}$ defined over that quadratic field. Once an exact basis is found, it can be mapped into a $\QQ_p$-basis by the embedding $F\injects \QQ_p$.
\end{rmk}

\section{$p$-adic automorphic forms}
\label{sec:autforms}
In this section we provide some definitions and results that allow one to efficiently evaluate modular forms. These so-called overconvergent methods originated from work of Pollack and Stevens and were adapted to the quaternionic setting by Matthew Greenberg in~\cite{greenberg2006thesis}.

Throughout this section we let $B$ denote a definite quaternion algebra split at $p$ as in Subsection \ref{ssec:quataction}. The objects $N^-$, $N^+$, $R$ and $\Gamma$ are also defined as in Subsection \ref{ssec:quataction}.
\subsection{Coefficient modules}
\label{ssec:coeffs}
Let $\Sigma_0(p) \subseteq B_p^\times$ denote the following monoid
\[
  \Sigma_0(p) = \left\{ \twomat abcd \in M_2(\Zp) ~\Bigg{|}~ p \mid c,~ d \in \Zp^\times,~ ad - bc \neq 0\right\}. 
\]

\begin{dfn}
A \demph{coefficient module} is a $\QQ_p$-vector space endowed with a \emph{right} action of $\Sigma_0(p)$. A coefficient module $M$ is said to be \demph{pure} of weight $n$ if for all $x\in M$ and all $\lambda\in\ZZ_p^\times$,
\[
x\cdot \smtx \lambda 00\lambda =\lambda^n x.
\]
\end{dfn}

The following families of coefficient modules are the only ones that we require.
\begin{ex}
\label{ex:Vncoeffmodule}
For $n \geq 0$ let $P_n$ and $V_n = P_n^\vee$ be defined as in Subsection \ref{ssec:integrals}. 
The space $P_n$ of polynomials of degree $\leq n$ is endowed with a right action of $\GL_2(\Qp)$, and hence in particular of $\Sigma_0(p)$. It is thus a coefficient module. The space $V_n$ inherits a left action by duality. We will also regard $V_n$ as a \emph{right} $\GL_2(\Qp)$-module in the usual way by setting
\[
(\omega\cdot \sigma)(P(X))=\omega(P(X)\cdot \sigma^{-1})
\]
for $\sigma \in \GL_2(\Qp)$. In this way, $V_n$ becomes a coefficient module and, in fact, $V_n$ is pure of weight $n$: if $\sigma=\smtx abcd$, then the action of $\sigma$ on $P_n$ is:
\[
P(X)\cdot \sigma=\det\sigma^{-n}(cX+d)^nP\left(\frac{aX+b}{cX+d}\right).
\]
We thus obtain the following right action on $V_n$:
\[
(\omega\cdot\sigma)(P(X))=\omega\left((-cX+a)^nP\left(\frac{dX-b}{-cX+a}\right)\right).
\]
Taking $a = d = \lambda$ and $b = c = 0$ shows that $V_n$ is pure of weight $n$.
\end{ex}

\begin{ex}
\label{ex:overconvergent}
Let $\bA$ denote the Tate ring in a single variable over $\Qp$:
\[
  \bA = \left\{\sum_{n \geq 0} a_n X^n \in \pseries{\Qp}{X} ~\Big{|}~ \abs{a_n} \to 0 \text{ as } n \to \infty\right\}.
\]
For each integer $n \geq 0$ let $\bA_n$ denote the ring $\bA$ endowed with the following left action of $\Sigma_0(p)$: for $\sigma = \twomat abcd$ in $\Sigma_0(p)$ and $f(X) \in \bA_n$, set
\[
  (\sigma \cdot f)(X) = (a - cX)^nf\left(\frac{-b+dX}{a-cX}\right).
\]
This action is well-defined since $c$ is divisible by $p$ and $a$ is a $p$-adic unit. However, it does not extend to an action of the full group $\GL_2(\Qp)$. Note that if one defines a left $\Sigma_0(p)$-structure on $P_n$ by setting $\sigma \cdot P = P\cdot \sigma^{-1}$, then the left actions of $\Sigma_0(p)$ on $P_n$ and $\bA_n$ are compatible with the inclusion of $P_n$ inside $\bA_n$.

Let $\bD_n$ denote the continuous dual of $\bA_n$, endowed with a right action of $\Sigma_0(p)$ by duality. This is a coefficient module, and in fact is pure of weight $n$. We refer to the spaces $\bD_n$ as spaces of \demph{rigid analytic distributions}. Note that the modules $\bD_n$ and $\bD_m$ are isomorphic as abstract $\QQ_p$-vector spaces, but not as representations of $\Sigma_0(p)$.
\end{ex}

\begin{rmk}
A third class of coefficient module was introduced by Pollack-Stevens in~\cite{pollackoverconvergent}. They are called \demph{finite approximation modules}, and are used to prove the theorems that we use to compute with rigid modular forms. If one is willing to accept those theorems, then it is not necessary to introduce these modules at all. We thus omit a precise description of these important objects.
\end{rmk}

\subsection{$p$-adic automorphic forms and harmonic cocycles}
\label{ssec:autforms}
Write $\Gamma_0(p\ZZ_p)=\GL_2(\QQ_p)\cap \Sigma_0(p)$.
\begin{dfn}
  Let $M$ be a coefficient module. A \demph{$p$-adic automorphic form for $\Gamma$} on $\GL_2(\QQ_p)$ with values
in $M$ is a left $\Gamma$-invariant and right $\Gamma_0(p\ZZ_p)$-equivariant map
$\varphi \colon \GL_2(\QQ_p) \to M$. The space of $M$-valued automorphic forms for $\Gamma$ on $\GL_2(\QQ_p)$ is denoted $\bA(\Gamma, M)$.
\end{dfn}

Fix a non-negative integer $n$ and let $V_n$ be as in Example~\ref{ex:Vncoeffmodule}.
\begin{dfn}
  The \emph{space of $p$-adic automorphic forms of weight $n$} for the group $\Gamma$ is $\bA_{n}(\Gamma)=\bA(\Gamma,V_{n})$.
\end{dfn}
  Given $\varphi \in \bA_{n}(\Gamma)$, we define an associated $V_n$-valued function $c_\varphi$ on the edges of $\cT$ as follows: suppose that $e\in E(\cT)$ is an even edge\footnote{Recall that an ordered edge of the Bruhat-Tits tree is said to be \emph{even} if its origin is an even distance from the privileged vertex $v_0$ corresponding to the lattice $\Zp^2$.}. Write $e=g \cdot e_0$ for some $g\in\GL_2(\QQ_p)$ and define $c_\varphi(e)=\varphi(g) \cdot g^{-1}$.

This is well-defined since the stabilizer of $e_0$ is the group $\Qp^\times\cdot\Gamma_0(p\ZZ_p)$, and $\varphi$ is invariant under $\Qp^\times$ and right equivariant under the action of $\Gamma_0(p\ZZ_p)$. If $e$ is an odd edge, then its opposite $\ebar$ is even. We extend $c_\varphi$ to the odd edges by setting $c_\varphi(e) = -c_\varphi(\ebar)$.

Conversely, given $c\in C_h(\Gamma,V_n)$, define $\varphi_c\in \bA_{n}(\Gamma)$ as follows: for $g\in \GL_2(\QQ_p)$, set $\varphi_c(g)= g^{-1}\cdot c(g \cdot e_0)$.

The assignments $\varphi\mapsto c_\varphi$ and $c\mapsto \varphi_c$ are well-defined and idenitfy $C_h(\Gamma,V_n)$ with the $p$-new subspace of $\bA_n(\Gamma)$. They thus yield an identification $\bA_{n}(\Gamma)^{p-\textrm{new}} \cong S_{n+2}(\Gamma)$ that is canonical up to a choice of parity for the edges. A change in parity results in the negative of the isomorphism above.\footnote{In the published version of this article we erroneously claimed that $C_h(\Gamma,V_n)$ is isomorhpic with the entire space $\bA_n(\Gamma)$ of $p$-adic automorphic forms. Harmonicity of the cocycles corresponds to the associated automorphic form being a $\Up$-eigenvector. The authors wish to thank James Newton for making us aware of this mistake.}

Let $M$ be an arbitrary coefficient module, and suppose that it is possible to represent $M$ on a computer (at least up to a prescribed precision). If $\vphi \in \bA(\Gamma, M)$, then $\vphi$ is determined completely by its values on a set of representatives for the finite double quotient $\Qp^\times\cdot\Gamma \backslash \GL_2(\QQ_p) / \Gamma_0(p\ZZ_p)$.  It is thus possible to store $\vphi$ as a vector of elements of $M$, which allows to compute with the spaces $\bA(\Gamma,M)$.

Let $\bD_n$ be as defined in Example \ref{ex:overconvergent}. The space of \demph{rigid analytic automorphic forms} of weight $n$ is the space of $p$-adic automorphic forms $\bbA_{n}(\Gamma) = \bA(\Gamma,\bD_{n})$.

Formation of spaces of $p$-adic automorphic forms is functorial in the coefficient modules. The natural inclusion $P_{n} \injects \bA_{n}$ yields a surjection $\bD_{n} \surjects V_{n}$ by duality, and thus defines a \demph{specialization map}
\[
  \rho \colon \bbA_{n}(\Gamma) \to \bA_{n}(\Gamma).
\]

\subsection{The $\Up$ operator}
\label{ssec:heckeops}
Let $M$ denote a coefficient module, and assume furthermore that it is pure of weight $n$ for some $n\geq 0$. In this subsection we give the action of the $\Up$ operator on the spaces $\bA(\Gamma,M)$, and specialize them to $A_n(\Gamma)$. We also describe the corresponding action imposed on $C_h(\Gamma,V_n) \cong A_n(\Gamma)$ by transport of structure.

Let $\alpha_0$ be the matrix $\smtx p001$, and write the double coset $\Gamma_0(p)\alpha_0\Gamma_0(p)$ as a disjoint union of left cosets
\[
\Gamma_0(p)\alpha_0\Gamma_0(p)=\bigcup_{i=0}^{p-1} \alpha_i \Gamma_0(p),\quad \alpha_i=\mtx pi01\quad\text{for $i=0\ldots p-1$}.
\]
\begin{dfn}
  The Hecke operator $\Up$ acts on $\bA(\Gamma,M)$ as:
\[
(\Up\varphi)(g)= p^{\frac{n}{2}}\sum_{i=0}^{p-1} \varphi(g\cdot \alpha_i)\cdot \alpha_i^{-1}.
\]
\end{dfn}

The action of $\Up$ on $\bA_n(\Gamma)$ is given by the same formula. Concretely,
\[
(\Up\varphi)(g) (P(x))= p^{\frac{n}{2}}\sum_{i=0}^{p-1} \varphi(g\cdot \alpha_i)(P(x)\cdot \alpha_i)
\]
Finally, we translate this definition to the space $C_h(\Gamma,V_n)$. We obtain:
  \[
  (\Up c)(e)(P(x))=p^{\frac{n}{2}}\sum_{\substack{o(e')=t(e)\\e'\neq \ol e}} c(e').
  \]

\begin{rmk}
It is a simple exercise to unwind the definitions above to work out formulae for the action of $\Up$ on $\bbA_n(\Gamma)$. Let $\Phi$ be an element of $\bbA_n(\Gamma)$. Fix a representative $b_j$, and for each $a\in\{0,\ldots p-1\}$, write
\[
b_j\smtx pa01 =\alpha \gamma b_{r(a)} \sigma_a,\quad \alpha\in\Qp^\times,\gamma\in\Gamma,\sigma\in\Gamma_0(p).
\]
Write also:
\[
\sigma_a\cdot (a+px)^i = \sum_{t\geq 0}\alpha_t x^t.
\]
With this notation, one finds:
\[
  (\Up\Phi)(b_j)(x^i) =  p^{-n/2}\sum_{a=0}^{p-1} \sum_{t \geq 0}\alpha_t \Phi(b_{r(a)})(x^t).
\]
This can be implemented on a computer since the values of $\Phi(b_{r(a)})$ are known.
\end{rmk}
\section{Applications}
\label{sec:applications}
In this final section we describe applications of all of the preceding objects and algorithms.

\subsection{Evaluation of rigid modular forms}
\label{ssec:computevalues}

Let $f \in S_{n+2}(\Gamma)$ and let $\varphi_f$ (resp. $\mu_f$) denote the corresponding $p$-adic automorphic form (resp. distribution). We store $\mu_f$ as a sequence of values in $V_{n}$ indexed by the edges of $\Gamma \backslash \cT$. From this data, one can evaluate $f$ at points $z \in \uhp_p$ via the Poisson kernel
\[
  f(z) = \int_{\PP^1(\Qp)} \frac{d\mu_f(t)}{z-t}
\]
using a process of polynomial approximation and Riemann integration. However, this method is hopelessly slow in practice: the number of balls in an exhaustion of $\PP^1(\Qp)$ grows exponentially as the radius of each of the balls shrinks, but the number of digits of precision grows only linearly with the radius. An alternate approach is thus necessary in order to compute values of rigid modular forms. Thankfully this was worked out in M. Greenberg's thesis~\cite{greenberg-thesis}, building on the work of Darmon-Pollack~\cite{darmonpollack}  and Pollack-Stevens~\cite{pollackoverconvergent}.

To explain how the integral above can be computed efficiently, first suppose that one had access to the following data: for every ball $B=\gamma U_{e_0}$ in $\PP^1(\QQ_p)$, the \demph{moments}
\[
  \int_B (\gamma^{-1}\cdot t)^id\mu_f(t)
\]
are given. With this at hand, one could break $\PP^1(\Qp)$ up into balls such that $1/(z-t)$ admits an analytic expression on each. Then one could use these locally analytic representations and use the corresponding moments to evaluate the requisite integral. In practice the number of balls arising in such a locally analytic representation is quite small. For example, if one wishes to evaluate $f$ at a point in the standard affinoid, then one can break $\PP^1(\Qp)$ up into the balls $a + p\Zp$ for $a = 0,\ldots, p-1$ and also $\{t ~|~ \abs{t} \geq p\}$. This skirts the exponential tesselation issue encountered above, but leaves us with the problem of computing the moments of $\mu_f$. Thankfully this problem has an elegant solution. We begin the explanation with the following key result.

\begin{thm}[Stevens]
\label{thm:uniquelift}
The restriction of the specialization map
\[
  \rho \colon \bbA_{n}(\Gamma)^{\leq {n}/2} \to \bA_{n}(\Gamma).
\]
is surjective, where the left-hand side is the subspace of $\bbA_n(\Gamma)$ on which $\Up$ acts with eigenvalue $\lambda$ such that $v_p(\lambda)\leq \frac{n}{2}$. In particular, given $\varphi\in\bA_n(\Gamma)$ there is a unique element $\Phi\in\bbA_n(\Gamma)$ such that $\rho(\Phi)=\varphi$ and such that $U_p\Phi=p^{\frac{n}{2}}\Phi$.
\end{thm}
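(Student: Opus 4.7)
This is the classical control theorem of Stevens adapted to the quaternionic setting; the argument has three main ingredients.

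First, consider the decreasing $\Sigma_0(p)$-stable filtration $F^\bullet$ of $\bD_n$ defined by $F^k = \{\mu \in \bD_n : \mu(X^j) = 0 \text{ for } j < k\}$, so that $F^0 = \bD_n$ and $F^{n+1} = \ker \rho$. Each quotient $F^k/F^{k+1}$ is one-dimensional. Using the formula $X^k \cdot \alpha_i = p^{-n}(pX+i)^k$ on the polynomial subspace $P_n \subset \bA_n$, a direct computation shows that the right action of $\alpha_i^{-1}$ on $F^k/F^{k+1}$ is multiplication by $p^{k-n}$; summing over $i = 0, \ldots, p-1$ and normalizing by $p^{n/2}$, one finds that $\Up$ acts on $F^k/F^{k+1}$ by the scalar $p^{k - n/2 + 1}$. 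Consequently the eigenvalues of $\Up$ on $F^{n+1} = \ker\rho$ all have slope at least $n/2 + 2 > n/2$.

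Second, endow $\bbA_n(\Gamma)$ with its natural $p$-adic Banach space structure coming from the fact that the finiteness of $\QQ_p^\times \Gamma \backslash \GL_2(\QQ_p)/\Gamma_0(p\ZZ_p)$ reduces a form to finitely many elements of $\bD_n$, on each of which we place the dual of the Gauss norm. The operator $\Up$ is completely continuous on this Banach space, since each $\alpha_i$ contracts the closed unit disc by a factor of $p$, so that the action on distributions factors through a compact restriction map; this is the quaternionic analogue of the functional-analytic input from Pollack-Stevens and Darmon-Pollack. Serre's theory of completely continuous operators then yields a canonical slope decomposition $\bbA_n(\Gamma) = \bbA_n(\Gamma)^{\leq n/2} \oplus \bbA_n(\Gamma)^{> n/2}$, with the first summand finite-dimensional.

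Finally, combine the two pieces. The specialization $\rho$ commutes with $\Up$, so it respects the slope decompositions on both sides, and by the first step $\ker\rho$ is contained in $\bbA_n(\Gamma)^{>n/2}$. This gives injectivity of $\rho$ restricted to $\bbA_n(\Gamma)^{\leq n/2}$. For surjectivity, first observe that $\rho: \bbA_n(\Gamma) \to \bA_n(\Gamma)$ is surjective at the level of ambient spaces: the surjection $\bD_n \twoheadrightarrow V_n$ descends to forms by averaging over the finite stabilizers of double coset representatives, which is possible because those stabilizers have order invertible in $\QQ_p$. Given $\varphi \in \bA_n(\Gamma)^{\leq n/2}$, pick any lift $\tilde\Phi$ and write $\tilde\Phi = \Phi_\leq + \Phi_>$ along the slope decomposition; then $\rho(\Phi_\leq) = \varphi$ because $\rho(\Phi_>)$ has slope $>n/2$ and so vanishes in the $\leq n/2$ component of $\varphi$. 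The uniqueness statement for the eigenvalue $p^{n/2}$ is then an immediate corollary: any such $\Phi$ has slope exactly $n/2$ and therefore lies in the finite-dimensional piece on which $\rho$ restricts to an isomorphism.

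The main obstacle I foresee is the functional-analytic step of setting up the Banach structure and verifying complete continuity of $\Up$, together with disentangling the left/right action conventions on $\bA_n$ and $\bD_n$; the notation $\cdot \alpha_i^{-1}$ has to be interpreted carefully since $\alpha_i^{-1}$ is not itself in $\Sigma_0(p)$, and it makes sense only via the extension of the action through the polynomial quotient $V_n$. Once that framework is in place, the entire theorem reduces to the slope computation on the graded pieces of $F^\bullet$.
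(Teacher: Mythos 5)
The paper itself does not prove this theorem: its ``proof'' is a citation to Stevens (Theorem~7.1 of \emph{Rigid analytic modular symbols}) and, for a constructive argument, to Pollack--Stevens (Theorem~5.12). Your sketch is essentially a reconstruction of that cited constructive proof -- the moment filtration $F^\bullet$ on $\bD_n$, a slope bound on $\ker\rho = F^{n+1}$, compactness of $\Up$ and a Serre-type slope decomposition, then lifting via the surjection $\bD_n \surjects V_n$ -- so you are not diverging from the paper so much as supplying the argument it outsources. Your use of averaging over the finite stabilizers to lift at the level of ambient spaces is exactly the construction of the initial lift $\Phi_0$ via the sets $S_j$ in Section~5, and your per-coset computation $X^k\cdot\alpha_i = p^{-n}(pX+i)^k$, hence the scalar $p^{k-n}$ on $F^k/F^{k+1}$, is correct under the interpretation of ``$\cdot\,\alpha_i^{-1}$'' you rightly flag (the operator dual to $f(X)\mapsto p^{-n}f(pX+i)$, which is the only interpretation compatible with the $V_n$ convention and with integrality).

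Three points need repair. First, $\Up$ does not act ``by the scalar $p^{k-n/2+1}$'' on forms valued in $F^k/F^{k+1}$: in the $\GL_2(\Qp)$-variable it is a nontrivial transfer operator, and -- more seriously -- summing over the $p$ cosets $i=0,\dots,p-1$ does not raise valuations: a sum of $p$ terms each of valuation at least $v$ has valuation at least $v$, not $v+1$. The correct statement is a divisibility estimate: forms valued in $F^k$ are sent by $\Up$ into $p^{\,k-n/2}$ times integrally valued forms, so on $\ker\rho$ all slopes are at least $n/2+1$. This still exceeds $n/2$, so your conclusion survives, but your claimed bound $n/2+2$ is not justified and the mechanism should be phrased as a norm bound, not an eigenvalue. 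Second, your surjectivity argument only produces preimages of $\varphi\in\bA_n(\Gamma)^{\leq n/2}$, whereas the theorem asserts surjectivity onto all of $\bA_n(\Gamma)$; you need the supplementary fact that every $\Up$-eigenvalue on $\bA_n(\Gamma)\cong C_h(\Gamma,V_n)$ has slope at most $n/2$ (true here because, via Jacquet--Langlands, these systems are $p$-new with eigenvalues $\pm p^{n/2}$ in the paper's normalization; for $n=0$ it follows at once from harmonicity). Third, the uniqueness statement should be read, as the paper's later application makes clear, for $\varphi$ that are themselves $p^{n/2}$-eigenvectors; with that reading your finite-dimensional small-slope isomorphism does give existence, and uniqueness follows since the difference of two such lifts would be a slope-$n/2$ eigenvector valued in $\ker\rho$, contradicting the kernel bound.
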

\begin{proof}
  The original proof can be found in~\cite[Theorem~7.1]{stevens1994rigid}. For a constructive proof, the reader may consult~\cite[Theorem~5.12]{pollackoverconvergent}.
\end{proof}
\begin{thm}
  Let $n\geq 0$ be an even integer. Let $f \in S_{n+2}(\Gamma)$ denote a rigid analytic modular form, and let $\mu_f$ denote the corresponding $V_{n}$-valued distribution. Let $\Phi_f$ be the unique lift of $\varphi_f$ such that $\Up \Phi_f = p^{\frac{n}{2}}\Phi_f$. Then one has
\[
  \Phi_f(g)(t^i) = \omega(\mu_f,g,i) := \int_{g \cdot \ZZ_p} (g^{-1}\cdot t)^id\mu_f(t),\quad g\in\GL_2(\QQ_p).
\]
\end{thm}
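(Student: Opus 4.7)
The plan is to invoke the uniqueness clause of Theorem~\ref{thm:uniquelift}. Define the candidate map $\Phi^\ast \colon \GL_2(\Qp) \to \bD_n$ by prescribing its moments
\[
  \Phi^\ast(g)(t^i) := \int_{g\cdot\ZZ_p}(g^{-1}\cdot t)^i\, d\mu_f(t),
\]
where the right-hand side makes sense via the Amice-Velu-Vishik extension of $\mu_f$ (Theorem~\ref{thm:amicevelu}), and extending by $\Qp$-linearity and continuity to all of $\bA_n$. To conclude that $\Phi^\ast = \Phi_f$, it suffices to verify three properties: (i) $\Phi^\ast \in \bbA_n(\Gamma)$; (ii) $\rho(\Phi^\ast) = \varphi_f$; and (iii) $\Up\Phi^\ast = p^{n/2}\Phi^\ast$.

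For (i), left $\Gamma$-invariance is immediate from the $\Gamma$-invariance of $\mu_f$ combined with the substitution $t = \gamma s$: one has $(\gamma g)\cdot\ZZ_p = \gamma\cdot(g\cdot\ZZ_p)$ and $((\gamma g)^{-1}\cdot(\gamma\cdot s))^i = (g^{-1}\cdot s)^i$. Right $\Gamma_0(p\ZZ_p)$-equivariance follows from the fact that every $\sigma \in \Gamma_0(p\ZZ_p)$ stabilizes $\ZZ_p \subset \PP^1(\Qp)$, together with a direct unwinding of the duality between the right $\Sigma_0(p)$-action on $\bD_n$ and the left action on $\bA_n$ recorded in Example~\ref{ex:overconvergent}. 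For (ii), restricting $\Phi^\ast(g)$ to the polynomial subspace $P_n \subseteq \bA_n$ produces an element of $V_n$; I then match it to $\varphi_f(g) = g^{-1}\cdot\mu_f(g\cdot U_{e_0})$ from Subsection~\ref{ssec:meas}, using the identification $U_{e_0} = \ZZ_p$ and the defining formula for the left action of $g^{-1}$ on $V_n$.

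The main obstacle is (iii). The key ingredient is the partition
\[
  g\cdot\ZZ_p = \bigsqcup_{a=0}^{p-1}g\alpha_a\cdot\ZZ_p, \qquad \alpha_a\cdot\ZZ_p = a + p\ZZ_p,
\]
combined with the identity $g^{-1} = \alpha_a(g\alpha_a)^{-1}$ on $\PP^1(\Qp)$. On the $a$-th piece, the substitution $s = (g\alpha_a)^{-1}t$ converts $g^{-1}\cdot t$ into $ps + a$, so that
\[
  \int_{g\alpha_a\cdot\ZZ_p}(g^{-1}\cdot t)^i\, d\mu_f(t) = \Phi^\ast(g\alpha_a)\bigl((px+a)^i\bigr).
\]
Summing over $a$ yields the fundamental identity
\[
  \Phi^\ast(g)(t^i) = \sum_{a=0}^{p-1}\Phi^\ast(g\alpha_a)\bigl((px+a)^i\bigr).
\]
The remaining task is to recognize this identity as the $\Up$-eigenequation via the explicit formula for $\Up$ on $\bbA_n(\Gamma)$ recorded in the remark at the end of Subsection~\ref{ssec:heckeops}, which expresses $(\Up\Phi)(b_j)(x^i)$ as a weighted sum over the $p$ coset representatives $\alpha_a$ after unwinding $\Phi(b_j\alpha_a) = \Phi(b_{r(a)})\cdot\sigma_a$. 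The delicate point is tracking the normalization factors arising from the purity of $\bD_n$, $\det\alpha_a = p$, and the $p^{n/2}$ prefactor built into the definition of $\Up$. Once (iii) is in hand, Theorem~\ref{thm:uniquelift} forces $\Phi^\ast = \Phi_f$, completing the proof.
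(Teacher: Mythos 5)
Your proposal is essentially the paper's own proof: the paper defines the same candidate $\Psi(g)(\phi)=\int_{g\cdot\ZZ_p}\phi(g^{-1}\cdot t)\,d\mu_f(t)$, checks $\rho(\Psi)=\varphi_f$ on the moments $t^i$ with $i\le n$, verifies the eigenequation using the same partition $g\cdot\ZZ_p=\bigsqcup_{a}g\alpha_a\cdot\ZZ_p$ with the group actions cancelling inside the integral, and concludes by the uniqueness clause of Theorem~\ref{thm:uniquelift}. The one step you defer --- recognizing $\Phi^\ast(g)(t^i)=\sum_a\Phi^\ast(g\alpha_a)\bigl((px+a)^i\bigr)$ as $\Up\Phi^\ast=p^{n/2}\Phi^\ast$ --- is precisely the paper's displayed computation: with the paper's normalization of the $\alpha_a^{-1}$-action (dual to $\phi(x)\mapsto\phi(a+px)$, the cancellation of $g\alpha_a$ with $(g\alpha_a)^{-1}$), your identity inserted into $(\Up\Phi)(g)=p^{n/2}\sum_a\Phi(g\alpha_a)\cdot\alpha_a^{-1}$ yields the eigenvalue $p^{n/2}$ immediately, so nothing substantive is missing.
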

\begin{proof}
Define $\Psi\in \bbA_n(\Gamma)$ as $\Psi(g)(\phi)=\int_{g \cdot\ZZ_p} \phi(g^{-1}\cdot t)d\mu_f(t)$. Note that $\rho(\Psi)=\varphi_f$, since for $i=0,\ldots,n$ we have
\begin{align*}
\Psi(g)(t^i)&= \int_{g \Zp}(g^{-1}\cdot t)^id\mu_f(t)= \mu_f(g\cdot \Zp)(t^i\cdot g^{-1})\\
&= g^{-1}\cdot\left(\mu_f(g\cdot \Zp)\right)(t^i) = (g^{-1}\cdot\mu_f(\Zp))(t^i).
\end{align*}
Also, using that $\bigcup_{i=0}^{p-1}\alpha_i\ZZ_p = \ZZ_p$ we can check that $\Up \Psi = p^{\frac{n}{2}} \Psi$ as follows.
\begin{align*}
(\Up \Psi)(g)(\phi(t))&=p^{\frac{n}{2}}\sum_{i=0}^{p-1}\Psi(g\alpha_i)(\phi(t)\cdot g\alpha_i)=p^{\frac{n}{2}}\sum_{i=0}^{p-1}\int_{g\alpha_i\ZZ_p}\phi(t) g\alpha_i (g\alpha_i)^{-1}d\mu_f(t)\\
&=p^{\frac{n}{2}}\sum_{i=0}^{p-1}\int_{g\alpha_i\ZZ_p}\phi(t)d\mu_f(t)=p^{\frac{n}{2}}\int_{g\ZZ_p}\phi(t)d\mu_f(t).
\end{align*}
Therefore the overconvergent automorphic form $\Psi$ must be $\Phi_f$, by the uniqueness statement of Theorem~\ref{thm:uniquelift}
\end{proof}

  Recall that, with notation as in the theorem above, $\Phi_f(\gamma)$ is an element in the dual of the Tate ring over $\Qp$. We represent such objects as a list of values in $\Qp$, where the $i$th entry of the list encodes the value of $\Phi_f(\gamma)$ applied to the rigid function $X^i$. Thus, the way that we represent the lift $\Phi_f$ on a computer encodes precisely all of the moments of the measure $\mu_f$ associated to $f$. We now explain how $\Phi_f$ may be computed up to an arbitrary $p$-adic precision by using an iterative procedure. The idea is to first lift $f$ to any form in $\bbA_k(\Gamma)$ specializing to $\varphi_f$, and then to iteratively apply $\widetilde \Up = p^{-n/2}\Up$ to this initial lift. The following result says that the resulting sequence converges linearly to $\Phi_f$.
\begin{thm}[{\cite[Proposition 5]{greenberg-thesis}}]
  Suppose that $\Phi \in \bbA_k(\Gamma)$ specializes to $\varphi_f$ and that $\Phi(\gamma)(X^i)=\omega(\mu_f,\gamma,i)$ for $i\leq i_0$. Then $\widetilde \Up\Phi$ also specializes to $f$, and $\Phi(\gamma)(X^i)=\omega(\mu_f,\gamma,i)$ for all $i\leq i_0+1$.
\end{thm}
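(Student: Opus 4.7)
My plan splits into two parts corresponding to the two claims. For the first claim, that $\widetilde\Up\Phi$ again specializes to $\varphi_f$, I would argue formally. The specialization map $\rho\colon\bD_n\to V_n$ is dual to the $\Sigma_0(p)$-equivariant inclusion $P_n\hookrightarrow \bA_n$, so $\rho$ is itself $\Sigma_0(p)$-equivariant, and consequently commutes with $\widetilde\Up$ on automorphic forms. Combined with the previous theorem's identity $\widetilde\Up\Phi_f=\Phi_f$, which upon specializing yields $\widetilde\Up\varphi_f = \rho(\widetilde\Up\Phi_f)=\rho(\Phi_f)=\varphi_f$, one concludes $\rho(\widetilde\Up\Phi)=\widetilde\Up\rho(\Phi)=\widetilde\Up\varphi_f=\varphi_f$.

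For the moment statement I would derive two parallel formulas and subtract them. First, unwinding the definition of $\widetilde\Up$ as in the remark at the end of Subsection~\ref{ssec:heckeops} (after absorbing the $\Gamma$-twisting into the left-invariance of $\Phi$), one expects
\[
(\widetilde\Up\Phi)(\gamma)(X^i)=\sum_{a=0}^{p-1}\Phi(\gamma\alpha_a)\bigl((a+pX)^i\bigr)=\sum_{a=0}^{p-1}\sum_{k=0}^{i}\binom{i}{k}a^{i-k}p^{k}\,\Phi(\gamma\alpha_a)(X^k).
\]
Second, decomposing $\gamma\ZZ_p=\coprod_{a=0}^{p-1}\gamma\alpha_a\ZZ_p$ and substituting $s=(\gamma\alpha_a)^{-1}t$ on each piece, together with the identity $\alpha_a\cdot s=ps+a$ for the linear fractional action, gives $\gamma^{-1}t=a+ps$ for $t\in\gamma\alpha_a\ZZ_p$. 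Expanding $(a+ps)^{i}$ by the binomial theorem and integrating term by term yields
\[
\omega(\mu_f,\gamma,i)=\sum_{a=0}^{p-1}\sum_{k=0}^{i}\binom{i}{k}a^{i-k}p^{k}\,\omega(\mu_f,\gamma\alpha_a,k).
\]

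Subtracting the two formulas and invoking the hypothesis $\Phi(\gamma\alpha_a)(X^k)=\omega(\mu_f,\gamma\alpha_a,k)$ for $k\le i_0$, all terms with $k\le i_0$ cancel, leaving
\[
(\widetilde\Up\Phi)(\gamma)(X^i)-\omega(\mu_f,\gamma,i)=\sum_{a=0}^{p-1}\sum_{k=i_0+1}^{i}\binom{i}{k}a^{i-k}p^{k}\bigl[\Phi(\gamma\alpha_a)(X^k)-\omega(\mu_f,\gamma\alpha_a,k)\bigr].
\]
For $i\le i_0$ the inner sum is empty and the identity is exact; for $i=i_0+1$ only the single term $k=i_0+1$ survives, carrying a factor of $p^{i_0+1}$, which since the bracketed differences are integral gives the linear convergence that is the intended meaning of the moment identity. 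The main obstacle I anticipate is verifying the $\widetilde\Up$-moment formula in step~2: the definition of $\widetilde\Up$ involves the right action of $\alpha_a^{-1}\notin\Sigma_0(p)$, so that action must be unwound through the dual pairing with the left $\Sigma_0(p)$-action on $\bA_n$, and one must confirm that the normalizing factor $p^{-n/2}$ cancels correctly against the purity-of-weight-$n$ condition on $\bD_n$ to produce the stated expansion.
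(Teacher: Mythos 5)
The paper offers no proof of this statement at all --- it is quoted verbatim from Greenberg with the citation \cite[Proposition 5]{greenberg-thesis} --- so there is no internal argument to compare against; judged on its own, your proposal follows the standard route (which is also Greenberg's): unwind $\widetilde\Up$, expand $(a+pX)^i$ binomially, and compare with the identity $\omega(\mu_f,\gamma,i)=\sum_{a}\sum_{k\le i}\binom{i}{k}a^{i-k}p^{k}\,\omega(\mu_f,\gamma\alpha_a,k)$, which is exactly finite additivity of the Amice--Velu extension of $\mu_f$ over $\gamma\ZZ_p=\coprod_a\gamma\alpha_a\ZZ_p$; your first claim, that $\rho$ intertwines $\widetilde\Up$ and that $\widetilde\Up\varphi_f=\varphi_f$, is also fine. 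Three points to tighten. First, the normalization you defer: the unwinding must come out with constant $1$, and besides redoing the computation in the remark at the end of Subsection~\ref{ssec:heckeops} for $\bD_n$, you can settle it cheaply by consistency --- any other constant $C$ would give $\widetilde\Up\Phi_f=C\,\Phi_f$ once you feed the recursion for $\omega$ into the moments of $\Phi_f$, contradicting the preceding theorem (the paper's own $p^{\pm n/2}$ bookkeeping is not entirely consistent, so this consistency check is the safest anchor). Second, your last step needs the bracketed differences $\Phi(\gamma\alpha_a)(X^{i_0+1})-\omega(\mu_f,\gamma\alpha_a,i_0+1)$ to be uniformly bounded ($p$-integral up to a fixed constant); this holds because the lift is taken with values in distributions with bounded moments (e.g.\ the averaged initial lift $\Phi_0$) and because the extended distribution $\mu_f$ has bounded moments, and without it the factor $p^{i_0+1}$ proves nothing. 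Third, your own error formula shows the conclusion cannot be a literal equality at $i=i_0+1$: what is true, and what the iteration needs, is agreement modulo $p^{i_0+1}$ (times the bound), i.e.\ to the working precision --- equivalently an exact statement in the finite approximation modules of Pollack--Stevens/Greenberg. Your reading of the statement as a precision assertion is therefore the correct one (note also the typo in the paper's conclusion, $\Phi(\gamma)(X^i)$ where $(\widetilde\Up\Phi)(\gamma)(X^i)$ is meant), but in a write-up you should state this congruence form explicitly rather than leave it as ``the intended meaning''.
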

Note that for $\Phi$ to specialize to $\varphi_f$ means exactly that the values of $\Phi(\gamma)$ on $X^i$ for $i\leq n$ agree with the first $n$ moments of $\mu_f$. Thus, the previous theorem shows that applying $\Up$ iteratively to an arbitrary lift of $\varphi_f$ will yield the moments of $\mu_f$ successively.

In practice one cannot compute all of the moments of $\mu_f$, due to limited time and memory resources. It is thus useful to know a priori how many moments are necessary for the computation of values of the rigid modular form $f$ up to a prescribed precision.
\begin{prop}[{\cite[Section 7]{greenberg2006heegner}}]
\label{p:numdigits}
  Let $z \in \uhp_p$, and assume for simplicity that $z$ belongs to the affinoid $\cA_0$. In order to compute the value $f(z)$ using the moments of $\mu_f$ to a $p$-adic precision of $N$ digits, one must precompute the first $N'$ moments of $\mu_f$ to precision $N''$, where
\[
N' = \max\{n\colon \ord_p(p^n/n)<N\},\quad N'' = M +\left\lfloor \frac{\log N'}{\log p}\right\rfloor
\]
\end{prop}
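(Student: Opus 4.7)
My plan is to decompose $\PP^1(\Qp)$ into the balls $B_a := a+p\Zp$ for $a=0,\ldots,p-1$ together with $B_\infty := \{t : |t|>1\}$, expand the Poisson kernel as a convergent geometric series on each ball, and integrate term by term against $\mu_f$. Since $z\in\cA_0$ forces $|z|\leq 1$ and $|z-a|\geq 1$, the ultrametric inequality gives $|z-a|=1$ for $a=0,\ldots,p-1$, so on $B_a$ we have the convergent expansion
\[
\frac{1}{z-t}=\sum_{n\geq 0}\frac{(t-a)^n}{(z-a)^{n+1}},
\]
with an analogous expansion $-\sum_{n\geq 0} z^n/t^{n+1}$ on $B_\infty$. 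Integrating against $\mu_f$ yields
\[
f(z)=\sum_{a=0}^{p-1}\sum_{n\geq 0}\frac{\mu_n^{(a)}}{(z-a)^{n+1}}-\sum_{n\geq 0}z^n \nu_n,
\]
where $\mu_n^{(a)} := \int_{B_a}(t-a)^n\,d\mu_f$ and $\nu_n := \int_{B_\infty}t^{-(n+1)}\,d\mu_f$. By the theorem identifying $\Phi_f$ with integration, each such moment is computable as $\Phi_f(g)(X^i)$ for an appropriate matrix $g$ sending the standard affinoid onto the given ball.

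The core of the argument is the truncation estimate: one must show the lower bound $v_p(\mu_n^{(a)})\geq \ord_p(p^n/n)$. Making the substitution $t=a+ps$ with $s\in\Zp$ reduces this to the estimate $v_p\!\left(\int_{\Zp}s^n\,d\tilde\mu\right)\geq -v_p(n)$ for the push-forward distribution, which is the Amice-Vélu-Vishik growth condition controlling the moments of elements of the overconvergent module $\bD_n$. Granting this, the tail $\sum_{n\geq N'}\mu_n^{(a)}/(z-a)^{n+1}$ has valuation at least $\min_{n\geq N'}\ord_p(p^n/n)\geq N$ by the very definition of $N'$, so truncating after $N'$ terms produces an answer accurate to $N$ digits.

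It remains to propagate moment-level precision through the computation. Replacing each of the at most $pN'$ retained moments $\mu_n^{(a)}$ by a $p$-adic approximation of precision $N''$ produces, after division by $(z-a)^{n+1}$ (which has valuation $0$) and ultrametric summation, an error of valuation at least $N''-\lfloor\log_p N'\rfloor$; the logarithmic loss tracks the finitely many indices $n\leq N'$ at which the factor $1/n$ in the growth bound amplifies an error by $v_p(n)\leq\lfloor\log_p N'\rfloor$. Choosing $N''=N+\lfloor\log_p N'\rfloor$ (absorbing any small additive constant into the $M$ of the statement) therefore guarantees the claimed final precision.

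The main obstacle is the growth estimate $v_p(\mu_n^{(a)})\geq\ord_p(p^n/n)$, since it is this estimate, and not a stronger $v_p(\mu_n^{(a)})\geq n$ (which would follow were $\mu_f$ a bounded measure in a naive sense), that dictates the peculiar form of $N'$. Verifying it requires honest use of the overconvergent nature of $\Phi_f$ and of the $\Sigma_0(p)$-twisted structure on $\bA_n$ from Example~\ref{ex:overconvergent}; once this is in hand the rest of the proof is bookkeeping.
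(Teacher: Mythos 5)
You should note at the outset that the paper contains no proof of Proposition~\ref{p:numdigits}: it is quoted from \cite[Section 7]{greenberg2006heegner}, so your argument must stand on its own. Your framework is the right one and matches what Subsection~\ref{ssec:computevalues} sketches: split $\PP^1(\Qp)$ into the balls $a+p\Zp$, $a=0,\dots,p-1$, and $\{\,t:|t|\ge p\,\}$, use that $z\in\cA_0$ forces $|z-a|=1$ to expand the Poisson kernel geometrically on each piece, and integrate term by term, the moments being the values $\Phi_f(g)(X^i)$ of the overconvergent lift. Up to that point there is nothing to object to.

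The gap is the quantitative heart of the statement, which you flag but do not close. The bound $v_p(\mu_n^{(a)})\ge\ord_p(p^n/n)$ is not a consequence of ``the Amice--V\'elu--Vishik growth condition'': what membership in $\bD_n$ (the continuous dual of the Tate algebra, Example~\ref{ex:overconvergent}) gives is that the moment sequence of $\tilde\mu$ is bounded, i.e.\ $v_p\bigl(\int_{\Zp}s^n\,d\tilde\mu\bigr)\ge -C$ for a constant $C$ (equal to $0$ after normalizing), while admissibility estimates for a slope-$n/2$ distribution produce losses of size roughly $\tfrac n2\log_p n$; neither has the shape $-v_p(n)$, and you verify nothing. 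Moreover, nothing in your own expansion ever produces a factor $1/n$: each retained term is a moment divided by $(z-a)^{n+1}$ with $|z-a|=1$, so an error of valuation at least $N''$ in a moment propagates to an error of valuation at least $N''$ in $f(z)$, and your own bookkeeping would simply give $N''=N$ --- the amplification ``by $v_p(n)\le\lfloor\log_p N'\rfloor$'' that you invoke has no source in your computation. The $p^n/n$ and the $\lfloor\log N'/\log p\rfloor$ in the statement come from elsewhere: in the cited source from local expansions carrying coefficients $1/n$ (logarithmic integrands) together with integral moments, and in practice from the digits lost in producing the moments via the $p^{-n/2}\Up$ iteration and the rescaling by $p^n$ that relates the stored values of $\Phi_f$ to the moments on $a+p\Zp$. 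A correct proof therefore needs (i) a proved integrality or boundedness statement for the moments of the slope-$n/2$ lift, which already justifies truncation at $N'$, and (ii) an honest account of where precision is lost in computing those moments, which is what $N''$ and the otherwise undefined constant $M$ are meant to absorb. As written, the key estimate is assumed, and the mechanism you propose for it would not deliver it.
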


\begin{rmk} We remark that this method allows to integrate other locally analytic functions against measures $\mu_f$ using moments, as long as one can compute a locally analytic decomposition for the integrand.
\end{rmk}

In the remainder of this subsection we explain how the above can be implemented in practice on a computer. To begin, note that the set of double cosets $\Qp^\times\cdot\Gamma \backslash \GL_2(\Qp)/\Gamma_0(p\ZZ_p)$ is computed by Algorithm~\ref{alg:fdomain}. It allows us to write a decomposition
\begin{equation}
\label{eq:gl2decomp}
  \GL_2(\Qp) = \coprod_{j = 1}^g \Qp^\times\cdot\Gamma\cdot b_j \cdot \Gamma_0(p\ZZ_p)
\end{equation}
for some collection of matrices $b_j$ representing the ordered edges of $\Gamma\backslash \cT$. Note that we may also regard these matrices as representing ordered edges of $\cT$, that is, one of the many edges of $\cT$  which projects to the corresponding edge in the quotient graph. Let $U_j$ denote the open ball in $\PP^1(\Qp)$ consisting of all the ends which pass through the edge $b_j$ of $\cT$. This is precisely $b_j\cdot U_{e_0}$. Let $\mu_f$ denote the distribution attached to a rigid modular form $f$. Since it is invariant under the action of $\Gamma$, it is determined by the finitely many values $\mu_f(U_j)$ for $j = 1,\ldots, g$. We thus store $f$ as a sequence of values $\mu_f(U_j)$ in $V_{n}$. Elements of $V_{n}$ can be stored, up to a finite $p$-adic precision, by using the basis for $V_{n}$ dual to the basis of polynomials $1,X, \ldots, X^{n}$ for the symmetric representation $P_{n}$.

Even if one is content with storing values up to a finite $p$-adic precision, rigid automorphic forms are described by an infinite amount of data. Thus, when computing with rigid automorphic forms, one must have applications in mind at the outset. For example, if one is interested in computing values of rigid modular forms as above, then Proposition \ref{p:numdigits} shows how many moments are needed in order to perform the computation to a desired accuracy. With this application in mind, a rigid automorphic form $\Phi$ can be stored as a list of values $\Phi(b_j)$, where each entry is a list of the values it takes on $X^i$, with $i$ varying in some range $N$. With this description, the projection map
\[
  \rho \colon \bbA_{n+2}(\Gamma) \to \bA_{n+2}(\Gamma)
\]
simply forgets all but the first $n$ values of each $\Phi(b_j)$.

Suppose that $\varphi \in \bA_{n+2}(\Gamma)$ satisfies $\Up\varphi = p^{n/2}\varphi$. We seek to find $\Phi\in \bbA_{n+2}(\Gamma)$ such that $\rho(\Phi)=\varphi$. For each $j$ let
\[
  S_j = \Gamma_0(p\ZZ_p) \cap b_j^{-1}\Gamma b_j
\]
where $b_j$ is defined as in \ref{eq:gl2decomp}. Note that $S_j$ is contained in the global units $R^\times$, which is a finite group since $B$ is definite, and hence $S_j$ is finite. Furthermore, $b_jS_jb_j^{-1}$ is the stabilizer in $\Gamma$ of the ordered edge of $\cT$ represented by $b_j$, and so by the methods of Subsection \ref{ssec:equivprob}, the set $S_j$ is computable for each $j$. We may use these sets to average certain values of the distribution $\varphi$ to get an initial lift $\Phi_0 \in \bbA_{n+2}(\Gamma)$. For each $j$ set
\[
  \Phi_0(b_j) = \frac{1}{\# S_j}\sum_{v\in S_j} \varphi(b_j)\cdot v 
\]

The first $n$ values of the initial lift $\Phi_0$ are given by the corresponding residues of $f$, or equivalently, by the moments of the associated distribution $\mu_f$. Next one must repeatedly apply the operator $p^{-n/2} \Up$ to $\Phi_0$ in order to force the higher values of the lift to agree with the moments of $\mu_f$.
\begin{rmk}
  The presence of the factor $p^{-n/2}$ can cause one to lose precision when repeatedly applying $p^{-n/2}\Up$. This occurs also when one computes with overconvergent modular symbols -- see \cite[Remark~8.4]{pollackoverconvergent}. To compute the values $p^{-n/2}\Up\Phi_0(\gamma)(t^j)$, one must treat two cases separately: if $j \geq n/2$, then the division causes no problems as one knows the relevant moment $\Phi_0(\gamma)(t^j)$ to sufficient precision. One may thus simply apply the formulae defining $\Up$. If $n/2 > j$ then one uses the fact that $p^{-n/2}\Up\Phi_0(\gamma)(t^j)$ is the value of the specialisation $p^{-n/2}\rho_{n+2}(\Up\Phi_0)(\gamma)$ evaluated on the polynomial $t^j$. Since the values of the specialisation are assumed to be known to the target precision, one can ensure that no precision is lost.
\end{rmk}

\subsection{Equations for Shimura curves}
\label{ssec:equations}
In this final section we describe a method that uses the above techniques to produce, in favorable cases, convincing conjectural equations for the canonical embedding of a $p$-adically uniformizable Shimura curve $X/\QQ$. We thus assume that $X$ is not hyperelliptic (hence $g\geq 3$), for two reasons: first, so that the canonical embedding yields a smooth projective model for $X$ and second, since equations for hyperelliptic Shimura curves have been described in~\cite{2010arXiv1004.3675M}. In fact, we will restrict to genus $4$ below. The case of genus $3$ is simpler, since the canonical embedding yields a hypersurface in $\PP^2$. We believe that one can push our method, part of which derives from~\cite{MR1297415}, to treat genera greater than $4$.

Start by calculating a basis $S=(c_1,\ldots,c_g)$ of eigenvectors for $C_h(\Gamma)$. This can be done by diagonalizing the action of $T_l$ on $M_2(X)$ for enough distinct primes $l$. Let $(f_1,\ldots,f_g)$ be the corresponding rigid analytic modular forms on $\cH_p$. These are defined over a number field $H$, and we have described how to evaluate these forms on points $z\in\cH_p$. Choose a prime $\frakp$ of $H$ above $p$, and let $K\subset \CC_p$ be a finite extension of $\QQ_p$ containing both the completion of $H$ at $\frakp$ and the quadratic unramified extension of $\QQ_p$.

The canonical embedding $X_\Gamma(K) \to\PP^{g-1}(K)$ is the map with the $f_i$ as coordinates:
\[
P\mapsto (f_1(P)\colon\cdots\colon f_g(P)).
\]
Since $X_\Gamma$ is not hyperelliptic this is indeed a closed embedding, for which we wish to find the defining ideal $I\subset H[x_1,\ldots,x_g]$.

We now specialize to the case $g = 4$. By~\cite[Example~5.2.2]{hartshorne1977ag}, $I=(F,G)$, where $F$ and $G$ are homogeneous polynomials of degree $2$ and $3$, respectively. Express them in the form $F(x_1,\ldots,x_4)=\sum_{I \in E_2} a_I x^I$, $,\quad G(x_1,\ldots,x_4)=\sum_{I \in E_3} a_I x^I$, where $E_2$ and $E_3$ denote the sets of exponent vectors $I$ of $4$ non-negative integers adding up to $2$ and $3$, respectively.

Denote by $N$ and $M$ the number of monomials in $4$ variables of degree $2$ and $3$, respectively. Let $E=E_2\cup E_3$ and set $V=\PP^N\times \PP^M$, with coordinates $(a_I)_{I\in E}$. The torus $T= \CC_p^4$ acts on $V$ by
\[
\lambda\cdot a_I:= \left(\prod_{i=1}^4 \lambda_i^{e_i}\right)a_I\quad \lambda = (\lambda_1,\ldots, \lambda_4) \in T,~ I = (e_1,\ldots, e_4) \in E.
\]
%

The Hilbert finiteness theorem ensures that the algebra of polynomial invariants on $V$ is finitely-generated. We can strengthen this result as follows:
\begin{prop}
  The field of weight-$0$ invariant functions on $V$ is finitely-generated over the base field.
\end{prop}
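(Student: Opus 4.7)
The plan is to realize the field $K(V)^T$ of weight-$0$ invariant rational functions as the function field of an algebraic variety, from which finite generation over $K$ follows automatically. Two approaches suggest themselves; I would favor the first for brevity, while the second gives a more explicit description that is likely useful for the computations in the sequel.

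The clean route is to invoke Rosenlicht's theorem on rational quotients: since $V$ is an irreducible quasi-projective variety on which the algebraic group $T$ acts algebraically, there is a $T$-stable dense open $U \subseteq V$ together with a dominant morphism $\pi\colon U \to Y$ onto an algebraic variety $Y$ whose geometric fibers are precisely the $T$-orbits in $U$. Pullback along $\pi$ then yields an isomorphism $K(Y) \cong K(U)^T = K(V)^T$, and since $Y$ is of finite type over $K$, its function field is finitely generated over $K$.

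A more hands-on alternative proceeds from the bi-homogeneous coordinate ring $A = K[a_I : I \in E]$ of $V$, which is a polynomial ring graded both by the two projective factors and by the lattice of $T$-weights. By the classical Hilbert theorem on reductive-group invariants, the subring $A^T$ is a finitely generated $K$-algebra. One then argues that $K(V)^T \subseteq \Frac(A^T)$ as follows: given $f = p/q \in K(V)^T$ written in lowest terms, factoriality of $A$ combined with the semi-simplicity of the $T$-action forces $p$ and $q$ to be semi-invariants of a common weight $w$; multiplying numerator and denominator by a suitable semi-invariant of weight $-w$ then expresses $f$ as a ratio of elements of $A^T$, and taking the bi-degree $(0,0)$ piece completes the identification.

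The main obstacle in the second approach is guaranteeing the existence of a balancing semi-invariant of weight $-w$, which need not a priori be present in $A$ if the weights that appear fail to span a saturated sublattice of the character lattice. One repairs this either by localizing at a carefully chosen semi-invariant or by passing to a $T$-invariant affine chart of $V$, which is essentially a bare-hands version of Rosenlicht's construction. For a self-contained proof I would present the Rosenlicht argument, while the explicit construction via $A^T$ appears more useful in practice, since generators of $K(V)^T$ must presumably be exhibited in order to extract equations for $X$.
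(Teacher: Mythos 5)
Your proposal is correct, but it takes a genuinely different route from the paper. The paper does not use Rosenlicht's rational quotient; instead it enlarges the variety to $V\times\bA^{\dim T}$, adjoining one auxiliary variable per torus factor and declaring it to have weight $-1$ for that factor and weight $0$ for the others. Any semi-invariant of non-negative weight on the original ring can then be promoted to an honest $T$-invariant polynomial by multiplying by a suitable monomial in the auxiliary variables; Hilbert's finiteness theorem is applied to this augmented ring, and specializing back one concludes that the field of weight-$0$ invariant rational functions on $V$ is generated by finitely many invariant \emph{monomials}. This is precisely the repair for the obstacle you flag in your second approach -- the possible absence of a balancing semi-invariant of weight $-w$ -- except the paper resolves it by enlarging the variety rather than by localizing or passing to an affine chart. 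As for what each argument buys: your Rosenlicht route (indeed, even the bare fact that a subfield of a finitely generated field extension of the base field is itself finitely generated) establishes the stated finiteness cleanly but non-constructively, whereas the paper's auxiliary-variable device yields the sharper conclusion about monomial generators, which is what the subsequent computation actually uses when it evaluates invariant monomials in the coefficients $a_I$ at the $p$-adic point $(\widetilde F,\widetilde G)$ and recognizes them as rational numbers. Your second sketch is correct in outline and, if completed along the paper's lines, would recover this; as written it leaves the balancing step to an unspecified localization, so the Rosenlicht argument is the one that stands on its own.
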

\begin{proof}
Consider the variety $V\times\bA^g$. Adjoin to its ring of regular functions as many auxiliary variables as the dimension of the torus. Endow the $i$th variable with weight $-1$ with respect to the $i$th coordinate of the torus, and $0$ else. Now just note that for every homogeneous rational function of weight $w\geq 0$ of the original ring one can construct an invariant polynomial of this new ring, by multiplying the original function by an appropriately chosen monomial. The Hilbert finiteness theorem implies finite generation of the augmented ring. Since the field of $T$-invariant rational functions on $V\times\bbA^g$ of weight $0$ is generated by monomials, the invariant rational functions (now again of the original ring) are generated by monomials.
\end{proof}

Integral linear algebra allows one to find generators for the algebra of invariant monomials of degree $0$ of the variety cut out by the polynomials $F$ and $G$. Since the Shimura curve has a model defined over $\QQ$, the resulting invariants will necessarily be rational, although computing them via the canonical embedding as described above only yields a $p$-adic approximation. Thus, one must try to recognize the resulting $p$-adic invariants as rational ones, say by using the LLL algorithm. One is frequently successful in obtaining convincing conjectural rational invariants.

The dimension of the algebra of invariants is in general smaller than the dimension of the variety $V$, and therefore such a procedure will not yield a unique rational equation for $V$. One needs to use arithmetic to pin down the last parameters, and the following result is useful for this last step.

\begin{prop}
\label{prop:alfixedpoints}
  The points on the Shimura curve $X_0(N,1)$ fixed by the Atkin-Lehner involutions $w_d$ for $d\mid N$ are the CM points attached to the orders:
  \begin{align*}
    &\ZZ[\sqrt{-1}]\text{ and }\ZZ[\sqrt{-2}] &\text{if }d&=2\\
    &\ZZ[\sqrt{-d}]\text{ and }\ZZ\left[\dfrac{1+\sqrt{-d}}{2}\right] &\text{if }d&\equiv 3\pmod {4}\\
    &\ZZ[\sqrt{-d}] &\text{if }d&\equiv 1,2 \pmod{4}\\
  \end{align*}
\end{prop}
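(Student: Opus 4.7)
The plan is to translate the geometric fixed-point condition into an algebraic condition on the CM order, and then enumerate the orders satisfying it.

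Recall the complex uniformization $X_0(N,1)_{\CC}=\Gamma^{1}\backslash\cH$, where $\Gamma^{1}$ is the group of reduced-norm-one units of a maximal order $R$ in the indefinite quaternion algebra $B/\QQ$ of discriminant $N$. CM points are parameterized by $R^{\times}$-conjugacy classes of optimal embeddings $\phi\colon \cO\hookrightarrow R$ of imaginary quadratic orders, and for a Hall divisor $d\|N$ the Atkin--Lehner involution $w_d$ is induced by the M\"obius action of any normalizer element $\omega_d\in N(R)$ of reduced norm $d$, well-defined up to $R^{\times}\cdot\QQ^{\times}$. Since the stabilizer of the CM point $P_\phi$ in $B^{\times}/\QQ^{\times}$ is $\phi(K^{\times})/\QQ^{\times}$ (with $K=\cO\otimes\QQ$), the condition $w_d\cdot P_\phi=P_\phi$ unwinds to the existence of $\alpha\in\cO$ such that $\phi(\alpha)$ represents the class of $w_d$ in $N(R)/R^{\times}\QQ^{\times}$. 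A local analysis at each prime dividing $N$ (using that the local algebra $B_\ell$ is a division algebra whose uniformizers all square, up to unit, to the uniformizer of $\QQ_\ell$) yields the criterion
\[
\alpha\in\cO,\quad N_{K/\QQ}(\alpha)=d,\quad \alpha^{2}=d\cdot u\text{ for some }u\in\cO^{\times}.
\]

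For orders with $\cO^{\times}=\{\pm 1\}$, this reduces to $\alpha^{2}=\pm d$. The $+$ sign gives $\alpha=\sqrt d\in\QQ$, impossible for the primes $d$ of interest. The $-$ sign gives $\alpha=\sqrt{-d}$, forcing $K=\QQ(\sqrt{-d})$ and $\cO\supseteq\ZZ[\sqrt{-d}]$. The orders of $K$ containing $\sqrt{-d}$ are $\ZZ[\sqrt{-d}]$ itself, together with its overorder $\ZZ[\tfrac{1+\sqrt{-d}}{2}]$ precisely when $-d\equiv 1\pmod 4$, i.e.\ $d\equiv 3\pmod 4$. This accounts for every entry of the table except the extra $\ZZ[\sqrt{-1}]$ appearing in the row $d=2$.

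The only imaginary quadratic orders with larger unit groups are $\ZZ[i]$ and $\ZZ[\omega]$ with $\omega=e^{2\pi i/3}$. For $\cO=\ZZ[\omega]$ a direct expansion of $(a+b\omega)^{2}$ shows that $\alpha^{2}=d\zeta$ with $\zeta\in\cO^{\times}\setminus\{\pm 1\}$ has no solution for prime $d$ other than those already supplied by $\sqrt{-3}\in\ZZ[\omega]$, corresponding to $d=3$ and already covered above. For $\cO=\ZZ[i]$, writing $\alpha=a+bi$ and imposing $\alpha^{2}=d\cdot u$ with $u\in\{\pm i\}$ yields $a^{2}-b^{2}=0$ and $2ab=\pm d$; among primes $d$, the only solution is $d=2$ with $\alpha=\pm 1\pm i$ (so $\alpha^{2}=\pm 2i$). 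This produces the exceptional CM by $\ZZ[\sqrt{-1}]$ fixed under $w_2$.

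The main obstacle is establishing the converse direction of the criterion: that an element $\phi(\alpha)\in R$ with reduced norm $d$ and $\phi(\alpha)^{2}\in d\cdot R^{\times}$ actually lies in $N(R)$ and represents the specific Atkin--Lehner class labeled by $d$, rather than any other class in the finite group $N(R)/R^{\times}\QQ^{\times}\cong(\ZZ/2)^{\omega(N)}$ indexed by Hall divisors of $N$. One handles this with the local analysis at the primes $\ell\mid N$: for each such $\ell$ the component $\phi(\alpha)_\ell\in R_\ell$ is forced to be either a uniformizer of the local division algebra $B_\ell$ (when $\ell=d$) or a unit in $R_\ell$ (when $\ell\neq d$), which makes the principal ideal $\phi(\alpha)R$ two-sided and equal to the canonical two-sided $R$-ideal of reduced norm $d$.
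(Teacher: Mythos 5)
The paper offers no argument for this proposition at all (its ``proof'' is a citation to Clark's thesis), so your write-up supplies a proof where the paper has none; what you give is essentially the standard Ogg--Clark computation, and it is correct in substance. The reduction of the fixed-point condition to ``$\alpha\in\cO$, $N_{K/\QQ}(\alpha)=d$, $\alpha^{2}=d\cdot u$ with $u\in\cO^{\times}$'' is right (after rescaling $\alpha$ by $\QQ^{\times}$ so that $\phi(\alpha)\in R^{\times}\omega_d\subseteq R$), and your converse direction is handled correctly: since $N$ is squarefree and $R$ is maximal, an element of $R$ of reduced norm exactly $d$ is automatically in $N(R)$ and its local valuations force it into the class of $w_d$, so every CM point by an order containing $\sqrt{-d}$ (resp.\ $1+i$ when $d=2$) is genuinely fixed. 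Two repairs are worth making. First, your local analysis only shows that $u=\alpha^{2}/d=\alpha/\bar\alpha$ is a unit of the quaternionic order; to conclude $u\in\cO^{\times}$ you should add the one-line observation $R\cap\phi(K)=\phi(\cO)$, which is exactly where optimality of the embedding is used. Second, and more importantly, your enumeration is phrased ``for prime $d$'', whereas the proposition --- and the paper's own application, which uses $w_{2\cdot 53}$ on $X_0(106,1)$ --- concerns arbitrary divisors $d\mid N$; since $N$ is a quaternion discriminant, $d$ is squarefree, and the enumeration does extend, but you should say so: $\alpha^{2}=+d$ is impossible for any squarefree $d>1$ (not just prime $d$); in $\ZZ[i]$ the equations $a^{2}=b^{2}$, $2ab=\pm d$ give $d=2a^{2}$, hence $d=2$ by squarefreeness; and in both unit-rich orders one can argue uniformly that $\alpha^{2}\in d\cO^{\times}$ forces $(\alpha)^{2}=(d)$, so every prime dividing the squarefree integer $d$ must ramify in $K$, giving $d=2$ for $\ZZ[i]$ and $d=3$ for $\ZZ[\omega]$ (the latter already covered by $\sqrt{-3}$). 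With these adjustments your argument is a complete and self-contained proof of the statement the paper only quotes.
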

\begin{proof}
  See~\cite{clarkthesis}
\end{proof}

We conclude with an illustration of this method in the case of the genus $4$ curve $X = X_0(53\cdot 2,1)$ uniformised at the prime $p=53$. An application of Algorithm \ref{alg:fdomain} yields a quotient graph with two vertices and $5$ edges. We thus hope to find an integral model for $X$ whose fiber at $p = 53$ is a semistable curve equal to two genus $0$ curves that intersect in $5$ regular double points. 

Begin by computing a basis of rigid analytic modular forms which are eigenvectors for the Hecke algebra. In this example the eigenvalues are rational integers, so that in the previous notation $H=\QQ$. By evaluating monomials in these funtions of degree $2$ and $3$ at random points in the quadratic unramified extension $K$ of $\QQ_p$, one obtains a quadratic relation $\widetilde F$ and a cubic relation $\widetilde G$ with coefficients in $\QQ_p$. In this example the only monomials that appear in $\widetilde{F}$ are
\[
  x_0^2,~ x_0x_2,~ x_1^2,~ x_2^2 \text{ and } x_3^2,
\]
while $\widetilde{G}$ is expressed in terms of
\[
  x_0^3,~ x_0^2x_2,~ x_0x_1^2,~ x_1^2x_2, \text{ and } x_2^3.
\]
Note that we cannot say for certain that other monomials do not appear in $\widetilde{F}$ and $\widetilde{G}$, as these relations were obtained using inexact $p$-adic methods. In this computation we worked with a $53$-adic accuracy of $100$ digits.

The pair $(\widetilde F,\widetilde G)$ describes a point in the product of projective spaces $V$ that lies in the same $T$-orbit as the desired rational pair $(F,G)$. Thus, the value of any rational invariants on this orbit must be rational. Hence, by computing the invariants using the $p$-adic point $(\widetilde F,\widetilde G)$, one obtains a $p$-adic approximation to a rational invariant. If one writes
\begin{align*}
F&=a_0x_0^{2}+a_1x_0 x_2+a_2x_1^2+a_3x_2^{2}+a_4x_3^{2}\\
G&=b_0x_0^{3}+b_1x_0^{2} x_2+b_2x_0 x_1^{2}+b_3x_1^2x_2+b_4x_2^{3},
\end{align*}
then one computes the following relations:
\begin{align*}
\frac{a_0a_2^2}{b_2^2}&\equiv \frac{25}{9} \pmod{53^{100}} &\frac{a_2^3b_4}{b_3^3}&\equiv \frac{16}{135}\pmod{53^{100}}\\
\frac{a_1a_2^2}{b_2b_3}&\equiv\frac{10}{3}\pmod{53^{100}} &\frac{a_2^3b_0}{b_2^3}&\equiv \frac{125}{27}\pmod{53^{100}}\\
\frac{a_3a_2^2}{b_3^2}&\equiv\frac{-17}{18}\pmod{53^{100}} &\frac{a_2^3b_1}{b_2^2 b_3}&\equiv \frac{55}{9}\pmod{53^{100}}.
\end{align*}

Assuming that the above congruences are in fact equalities allows us to eliminate the variables $a_0,a_1,a_3$ and $b_0,b_1,b_3$ to get
\begin{align*}
F&=\frac{25}{9}C^2x_0^2 + \frac{10}{3}CDx_0x_2 +B^3x_1^2-\frac{17}{18}D^2x_2^2 + AB^2x_3^2,\\
G&=\frac{125}{27}C^3x_0^3 + \frac{55}{9}C^2Dx_0^2x_2  + B^3Cx_0x_1^2 + B^3Dx_1^2x_2 + \frac{16}{135}D^3x_2^3.
\end{align*}
The substitutions
\begin{align*}
  x_0&\mapsto \frac{x_0}{5C},&
  x_1&\mapsto \frac{x_1}{3B},&
  x_2&\mapsto \frac{x_2}{D},&
  x_3&\mapsto \frac{x_3}{3B},
\end{align*}
yield the simpler equations
\begin{align*}
  F&=2 x_0^{2} + 12 x_0 x_2 + 2Bx_1^2-17x_2^2+18Ax_3^2.\\
G&=5 x_0^{3} + 33 x_0^{2} x_2 + 3 B x_0 x_1^{2} + 15 B x_1^2 x_2+16x_2^3. 
\end{align*}
Note that, at the cost of rescaling $x_1$ and $x_3$ by rational factors, we may assume that the unknowns $A$ and $B$ are squarefree integers.

Each of the basis elements $x_0,x_1,x_2,x_3$ corresponds to an elliptic curve of conductor $106$. In terms of Cremona labels, they are respectively \emph{106a}, \emph{106b}, \emph{106c} and \emph{106d}. The Atkin-Lehner involutions $w_2$ and $w_{53}$ act by:
\begin{align*}
  w_2(x_0\colon x_1\colon x_2\colon x_3)&= (x_0\colon -x_1\colon x_2\colon -x_3)\\
w_{53}(x_0\colon x_1 \colon x_2\colon x_3)&=(x_0\colon -x_1\colon x_2\colon x_3).
\end{align*}

Since the point $(0:1:0:0)$ is not on the curve defined by $F$ and $G$, the divisor of $x_1$ is supported on the fixed points of $w_{53}$. Similarly, the divisor of $x_3$ is supported on the fixed points of $w_{2\cdot 53}$. By Proposition~\ref{prop:alfixedpoints} these fixed points are CM points attached to certain orders of imaginary quadratic fields. The CM theory of Shimura curves implies that the coordinates of these points generate the ring class field of the corresponding order.

The coordinates of the fixed points of $w_{53}$ are $(\alpha \colon0\colon 1\colon \beta)$, where $\alpha$ is a root of the polynomial $5t^3+33t^3+16$ and $\beta$ satisfies
\[
2\alpha^2+12\alpha-17+2 A \beta^2=0.
\]
Let $M$ be the Galois closure of the field $\QQ(\sqrt{-53},\alpha,\beta)$. By Proposition~\ref{prop:alfixedpoints} this should be the Hilbert class field of $\QQ(\sqrt{-53})$, and therefore it is required that $A$ is (up to squares) either $-2\cdot 3$ or $2\cdot 3\cdot 53$. If $A$ is divisible by any primes other than $2$ or $53$ then the ramification in $M$ is too large, and for the primes $2$ and $53$ one may exhaust all possibilities on a computer. However, up to a $p$-adic square $A$ is explicitely computed to be equal to
\[
47 + 53 + 2\cdot 53^2 + 5\cdot 53^3 + 10\cdot 53^4 +\cdots.
\]
Therefore $A=-6$. A similar computation with $w_{2\cdot 53}$ yields $B=-3$. We thus obtain the (conjectural) equations
\begin{align}
    F&=2 x_0^{2} + 12 x_0 x_2 -6x_1^2- 17 x_2^{2} - 108 x_3^{2}, \\\nonumber
G&=5 x_0^{3} + 33 x_0^{2} x_2 -9x_0x_1^2- 45 x_1^2 x_2+16x_2^3.
\end{align}
This model does not have good reduction at $2$, $3$ or $53$. Outside of these primes the model has good reduction. At $p = 2$ the fiber is not reduced and hence not semistable. No rescaling of the coordinates yields a model with semistable reduction at $p =2$ or good reduction at $p=3$. At $p = 53$ the reduction is semistable.

We used Magma to count points over finite fields in order to find the Euler factors of the $L$-series of the curve described by $F$ and $G$ for primes up to $5000$. The $L$-series of the Shimura curve $X$ is a product of $L$-series of rational elliptic curve. We verified that the Euler factors computed from our equations agree with the expected values obtained from these elliptic curves. While one could try to use the method of Faltings-Serre as in~\cite{serreletter} to prove the accuracy of the proposed equations for $X_0(53\cdot 2,1)$, we feel that the preceding computations alone provide a compelling showcase of the use of the $p$-adic algorithms described in this paper.

\bibliographystyle{amsalpha}
\bibliography{refs}
\end{document}